\font\goth=eufm10
\font\bigmath=cmsy10 scaled \magstep 2
\font\subgoth=eufm8
\newcommand{\bigtimes}{\hbox{\bigmath \char'2}}
\newcommand{\gc}{\hbox{\goth c}}
\newcommand{\cf}{\hbox{\rm cf}}
\newcommand{\subgc}{\hbox{\subgoth c}}
\newcommand{\mod}{\hbox{\rm mod}}
\newcommand{\emp}{\emptyset}
\newcommand{\ben}{\mathbb N}
\newcommand{\ber}{\mathbb R}
\newcommand{\bep}{\mathbb P}
\newcommand{\beq}{\mathbb Q}
\newcommand{\bez}{\mathbb Z}
\newcommand{\supp}{\hbox{\rm supp}}
\newcommand{\nhat}[1]{\{1,2,\ldots,#1\}}
\newcommand{\ohat}[1]{\{0,1,\ldots,#1\}}
\newcommand{\pf}{{\mathcal P}_f}
\newcommand{\symdif}{\bigtriangleup}
\newtheorem{theorem}{Theorem}[section]
\newtheorem{corollary}[theorem]{Corollary}
\newtheorem{lemma}[theorem]{Lemma}
\newtheorem{question}[theorem]{Question}
\theoremstyle{definition}
\newtheorem{definition}[theorem]{Definition}
\title{Pairwise sums in colourings of the reals}
\date{}
\author{Neil Hindman
        \footnote{Department of Mathematics,
                 Howard University,
                  Washington, DC 20059, USA.\hfill\break
                  {\tt nhindman@aol.com}}
        \thanks{This author acknowledges support received from the National
                Science Foundation (USA) via Grant DMS-1160566.}
        \and
        Imre Leader
        \footnote{Department of Pure Mathematics and Mathematical Statistics,
                  Centre for Mathematical Sciences,
                  Wilberforce Road, Cambridge, CB3 0WB, UK.\hfill\break
                  {\tt leader@dpmms.cam.ac.uk}}
\and
        Dona Strauss
        \footnote{Department of Pure Mathematics,
                  University of Leeds, Leeds LS2 9J2, UK.\hfill\break
                  {\tt d.strauss@hull.ac.uk}}                 
}
\begin{document}

\maketitle

\begin{abstract} 

Suppose that we have a finite colouring of $\ber$. What sumset-type structures can we hope to
find in some colour class? One of our aims is to show that there is such a colouring
for which no uncountable set has all of its pairwise sums monochromatic. We also show that
there is such a colouring such that there is no infinite set $X$ with $X+X$ (the pairwise
sums from $X$, allowing repetition) monochromatic. These results assume CH. In the other
direction, we show that if each colour class is measurable, or each colour class is Baire,
then there is an infinite set $X$ (and even an uncountable $X$, of size the reals)  
with $X+X$ monochromatic. We also give versions for all of these results for $k$-wise sums in
place of pairwise sums. 

\end{abstract} 

\section{Introduction}

Our starting point for this paper is a question that arises from two well known statements. 
One is a standard application of Ramsey's theorem: that whenever
the natural numbers are finitely coloured (i.e.~partitioned into a finite number of classes) there is an 
infinite set $X$ with all pairwise sums
(meaning $\{x+y:\ x,y \in X,\ x \neq y\}$) monochromatic. Indeed, given such a colouring $c$, we
induce a colouring of $[ \ben ]^2$, the unordered pairs from $\ben$, by giving $\{x,y \}$ the
colour $c(x+y)$, and apply Ramsey's theorem. The other result is that there is a finite
colouring of $[ \ber ]^2$ without an uncountable monochromatic set -- one fixes a well-ordering
of the reals and then colours the pair $\{x,y \}$ according to whether the usual ordering
and this well-ordering agree or disagree on it (see \cite{EHR}).

This suggests a natural question. The `abstract' form, namely Ramsey's theorem, fails in
the uncountable case (in $\ber$), but what about the `additive' form: might it still be true
that in any finite colouring of $\ber$ there is an uncountable set with all its pairwise
sums monochromatic?

One of our main aims in this paper is to answer this question in the negative: we give a finite
colouring of $\ber$ such that there is no such uncountable set. This seems to be considerably harder
that the `abstract' question. Indeed, rather curiously, our proof relies on the 
Continuum Hypothesis (CH), unlike for the result of \cite{EHR}. We do not know whether or 
not CH is needed. Without CH, our
result asserts that there is no such set of size $\gc$ (the cardinality of the reals).

In the naturals, one cannot extend the starting result to get an infinite set $X$ with $X+X$
(meaning $\{x+y:\ x,y \in X \}$) monochromatic: this is because such a sumset automatically
contains two numbers with one roughly twice the other, and this can easily be ruled out by a 
suitable 3-colouring (see \cite{Hb}). Our other main aim is to show that 
this is also the case in the reals: there
is a finite colouring of $\ber$ with no infinite $X$ having $X+X$ monochromatic. Our proof again
uses CH. In fact, our proof goes through as long as $\gc < \aleph_\omega$, but we do not know
what happens if no such cardinal assumptions are made. The proof starts in the rationals: it turns out
that the key first step is to find not just a bad colouring of $\beq$ (meaning with no 
infinite $X$ having $X+X$ monochromatic),
but instead bad colourings for $\beq^m$, for each $m$, with the number
of colours bounded. The other main idea is then a kind of stepping-up
argument which may be of independent interest.

What happens for `nice' colourings of the reals? We show that if each colour class is measurable, or each
colour class has the property of Baire, then there does indeed exist an infinite $X$, and even
an uncountable $X$ (of size $\gc$), with $X+X$ monochromatic. It turns out that, while these
results would be reasonably straightforward just for $X$ infinite, it is quite an intricate
task to obtain uncountable $X$. We give a fairly unified treatment of the measurable and Baire
cases -  although these two cases are superficially similar, we have to work harder in the 
measurable case.

In the rest of this section we introduce (and make precise) the notation we shall be using, and
also mention some additional background and motivation. Also, our results go through for $k$-wise sums instead of
just pairwise sums, and we usually prove the results in that form -- there are sometimes some extra complications
in this general case.

We will use colouring terminology throughout this paper. A
{\it colouring\/} of a set $X$ is a function whose domain is
$X$.  A finite colouring is a colouring whose range is finite,
and for a cardinal $\kappa$, a $\kappa$-colouring is a 
colouring whose range has cardinality $\kappa$. A set $Y$
is {\it monochromatic\/} with respect to a given colouring
$\psi$ provided $\psi$ is constant on $Y$.
We write $\omega$, $\omega_1$, and $\gc$ for the first infinite
cardinal, the first uncountable cardinal, and the cardinal number of
the real line, respectively, and let $\ben=\omega\setminus\{0\}$.
We take a cardinal to be the first ordinal of a given size.  So 
$\omega$, $\omega_1$, and $\gc$ are respectively the first
infinite ordinal, the first uncountable ordinal, and the 
first ordinal with the same size as $\ber$.  Thus, in the
alternative aleph notation, $\omega=\aleph_0$ and $\omega_1=\aleph_1$.
Given a set $X$ and a cardinal $\kappa$, we let
$[X]^\kappa=\{A\subseteq X:|A|=\kappa\}$.

If one finitely colours the set $\ber$ of real numbers, it is an immediate
consequence of the Finite Sums Theorem \cite{Ha} that there is an infinite
set $X\subseteq\ber$ such that $FS(X)$ is monochromatic, where
$FS(X)=\{\sum F:F\in\pf(X)\}$
and, for any set $X$, $\pf(X)$ is the set of finite nonempty subsets of 
$X$ (and $\sum F$ denotes the sum of the elements of $F$). 

There are two natural ways to try to extend this result. The 
first is to allow repetition of terms in the sums from $X$.
It is an open question of Owings \cite{O} whether, for any
$2$-colouring of $\ben$, there is some infinite $X\subseteq\ben$ with
$X+X$ monochromatic.  On the other hand, as mentioned above, it was shown in 
\cite{Hb} that there are $3$-colourings of $\ben$ (in fact, with
one of the colour classes quite small) for which no
such $X$ exists.  

The second natural extension would be to produce an uncountable
$X$ with $FS(X)$ monochromatic.  This extension is easily seen
to be false. (See Theorem \ref{kandm} below.) So one
next asks whether one can get an uncountable $X\subseteq\ber$
with $\{x+y:\{x,y\}\in[X]^2\}$ monochromatic. 

\begin{definition}\label{defFS} Let $k\in\ben$ and let
$X\subseteq\ber$.
\begin{itemize}
\item[(a)] $kX=X+X+\ldots+X$ ($k$ times).
\item[(b)] $FS_k(X)=\{\sum F:F\in [X]^k\}$.
\end{itemize}\end{definition} 

Note that if $k>1$, then $kX\neq\{kx:x\in X\}$.
Note also that $2X=FS_2(X)\cup\{2x:x\in X\}$.

We show in Section 2 that for each $k\in\beq\setminus\{1\}$
and each cardinal $\kappa<\omega_\omega$,
there is a finite colouring of $\bigoplus_{\sigma<\kappa}\beq$  such that no
infinite $X$ has $FS_k(X)\cup\{kx:x\in X\}$ monochromatic. 
Therefore, as long as $\gc<\omega_{\omega}$, there is a finite colouring of
$\ber$ such that no infinite $X$ has $FS_k(X)\cup\{kx:x\in X\}$ monochromatic. 
In particular, no infinite $X$ has $kX$ monochromatic. 

In Section 3, we show that there is a $2$-colouring of
$\ber$ such that for any $k\in\ben\setminus\{1\}$ there
is no $X\subseteq\ber$ with $|X|=\gc$ such that
$FS_2(X)$ is monochromatic.

We show in Section 4 that if a countable colouring of 
$\ber$ has all of its colour classes with the 
property of Baire or has all of its colour classes
measurable, then for each $k\in\ben\setminus\{1\}$, there is 
a set $X\subseteq\ber$ with $|X|=\gc$ such that $kX$ is monochromatic.
We mention that there has been previous work on the Ramsey theory of Baire or measurable
colourings of the reals: see for example \cite{BHW} for results on infinite sums (without
repetition) and \cite{BHL} for results on sums and products at the same time.

Finally, the reader may wonder why we consider only $k$-sums for one fixed $k$ at a 
time. So, as already referred to above, we conclude this introduction with the following 
easy result, which in fact may
be skipped as it is not needed in the sequel.

\begin{theorem}\label{kandm} Let $k,m\in\ben$ with $k<m$.
Then there is a finite colouring of $\ber$ such that there does not
exist an uncountable set $X\subseteq\ber$
with $FS_k(X)\cup
FS_m(X)$ monochromatic.
\end{theorem}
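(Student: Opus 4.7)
The plan is to exhibit an explicit finite colouring of $\ber$ and rule out uncountable monochromatic $FS_k(X)\cup FS_m(X)$ via a condensation-point argument, exploiting the fact that the ratio $m/k$ gives a logarithmic scale on which $k$-sums and $m$-sums are forced apart. Set $q=m/k>1$ and define a three-colouring $c:\ber\to\{0,1,2\}$ by $c(0)=2$ and, for $x\ne 0$, $c(x)\equiv\lfloor\log_q|x|\rfloor\pmod{2}$. The crucial algebraic observation is that $\log_q(m/k)=1$, so for every $p>0$ one has $\log_q(mp)-\log_q(kp)=1$; consequently $\lfloor\log_q(mp)\rfloor=\lfloor\log_q(kp)\rfloor+1$, and these two floors have opposite parities.

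Assume for contradiction that $X\subseteq\ber$ is uncountable and $FS_k(X)\cup FS_m(X)$ lies in a single colour class $C$; since $FS_k(X)$ is uncountable, $C\ne\{0\}$. I would invoke the standard fact (a consequence of the Lindel\"of property of $\ber$) that the set of condensation points of any uncountable $X\subseteq\ber$ is itself uncountable, where a condensation point is a point every open neighbourhood of which meets $X$ in an uncountable set. The set $B=\{0\}\cup\{\pm q^n/k:n\in\bez\}\cup\{\pm q^n/m:n\in\bez\}$ is countable, so there is a condensation point $p$ of $X$ with $p\notin B$. Replacing $X$ by $-X$ if necessary (this preserves everything, since the colouring is invariant under $x\mapsto-x$ and $FS_k(-X)=-FS_k(X)$), I may assume $p>0$. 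With $n_1=\lfloor\log_q(kp)\rfloor$ and $n_2=\lfloor\log_q(mp)\rfloor=n_1+1$, the condition $p\notin B$ ensures that $kp$ and $mp$ lie strictly in the interiors of $(q^{n_1},q^{n_1+1})$ and $(q^{n_2},q^{n_2+1})$; one may then pick $\epsilon>0$ so small that $(kp-k\epsilon,kp+k\epsilon)\subseteq(q^{n_1},q^{n_1+1})$ and $(mp-m\epsilon,mp+m\epsilon)\subseteq(q^{n_2},q^{n_2+1})$. By condensation, $X\cap(p-\epsilon,p+\epsilon)$ is uncountable, so pick distinct $x_1,\ldots,x_m$ inside it. Then $x_1+\cdots+x_k\in FS_k(X)\subseteq C$ has colour $n_1\bmod 2$, while $x_1+\cdots+x_m\in FS_m(X)\subseteq C$ has colour $n_2\bmod 2=(n_1+1)\bmod 2$; these differ, contradicting monochromaticity of $C$.

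The only mildly delicate step is the extraction of a condensation point of $X$ avoiding the countable ``bad'' set $B$ of boundary values $\pm q^n/k$ and $\pm q^n/m$ (where the floor $\lfloor\log_q|\cdot|\rfloor$ could jump); once that is in hand, the contradiction is essentially a one-line consequence of the identity $\log_q(m/k)=1$.
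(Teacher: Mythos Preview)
Your proof is correct, and it takes a genuinely different route from the paper's. The paper uses a pigeonhole argument: it colours $\ber^+$ by $\lfloor\log_\alpha x\rfloor\pmod l$ for a suitably fine base $\alpha$ and modulus $l$, then finds uncountably many $x\in X$ lying in a single interval $[\alpha^t,\alpha^{t+1})$; summing $k$ or $m$ of these lands in disjoint residue classes. For $k>1$ this forces the base $\alpha=k^{1/u}$ to be strictly smaller than $m/k$ (so that the $k$-sum interval has length at most one $\alpha$-step) and the modulus $l$ to grow with $k$ and $m$. Your condensation-point argument bypasses this entirely: by clustering $m$ points of $X$ arbitrarily close to a single $p$, the $k$-sum and $m$-sum are forced arbitrarily close to $kp$ and $mp$, whose $\log_q$-floors differ by exactly $1$ since $q=m/k$. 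This lets you get away with base $q$ and only two colours on $\ber\setminus\{0\}$, uniformly in $k$ and $m$. The paper's argument is purely combinatorial (no topology beyond the countable cover by intervals), while yours trades the finer colouring for the standard fact that an uncountable subset of $\ber$ has uncountably many condensation points. Both colourings are Borel, so the remark following the theorem still applies.
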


\begin{proof} Note that
it suffices to provide a finite colouring of $\ber^+=\{x\in\ber:x>0\}$.  (Then
colour $0$ at will and for $x<0$ let $x$ have the colour of $|x|$.)

We consider first the possibility that $k=1$.  In this case
colour $x\in\ber^+$ by $\lfloor \log_m(x)\rfloor\ (\mod\ 2)$.
Suppose we have an uncountable set $X\subseteq\ber$ 
with $FS_1(X)\cup
FS_m(X)$ monochromatic.
Pick $t\in\bez$ such that $$|\{x\in X:\lfloor \log_m(x)\rfloor=t\}|
\geq\omega_1\,.$$  Pick $F\in[X]^m$ such that for all $x\in F$, 
$\lfloor \log_m(x)\rfloor=t$.  Then given $x\in F$,
$m^t\leq x<m^{t+1}$ so $m^{t+1}\leq\sum F<m^{t+2}$
and thus $t+1=\lfloor \log_m(\sum F)\rfloor$, while
$t+1\not\equiv t\ (\mod\ 2)$, a contradiction.

Now assume that $k>1$. Pick $u\in\ben$ such that $k^{1+1/u}\leq m$ and let
$\alpha=k^{1/u}$.  Pick $v\in\ben$ such that $\alpha^v\leq m<\alpha^{v+1}$ and
note that $m\geq k^{1+1/u}=\alpha^{u+1}$ so $v\geq u+1$. Let $l=v+2-u$ and
color $x\in\ber^+$ by $\lfloor \log_\alpha(x)\rfloor\ (\mod\ l)$.

Suppose we have an uncountable set $X\subseteq\ber$ 
with $FS_k(X)\cup
FS_m(X)$ monochromatic.
Pick $t\in\bez$ such that $|\{x\in X:\lfloor \log_\alpha(x)\rfloor=t\}|
\geq\omega_1$.  Pick $F\in[X]^m$ such that for all $x\in F$, 
$\lfloor \log_\alpha(x)\rfloor=t$ and pick $H\in [F]^k$.

For $x\in F$, $\alpha^t\leq x<\alpha^{t+1}$ so
$\textstyle \alpha^{t+u}=k\alpha^t\leq \sum H<k\alpha^{t+1}=\alpha^{t+u+1}\,.$
Also $\alpha^{t+v}\leq m\alpha^t\leq \sum F<m\alpha^{t+1}\leq\alpha^{t+v+2}$.
Therefore $\lfloor\log_{\alpha}(\sum H)\rfloor
=t+u$ and
$\lfloor\log_{\alpha}(\sum F)\rfloor$ is either $t+v$ or $t+v+1$.
Thus $t+u\equiv t+v\ (\mod\ l)$ or $t+u\equiv t+v+1\ (\mod\ l)$ while
$0<v-u<l-1$, a contradiction.
\end{proof}

Notice that the colour classes in the proof of Theorem \ref{kandm} are all
Borel.

\section{Preventing infinite $kX$ in $\beq$ and $\ber$}

We shall need the following result, whose proof we leave as an easy exercise.

\begin{lemma}\label{nofix} Let $X$ be a finite set
and let $f:X\to X$ be a function with no fixed points.  Then there exists
$\nu:X\to\{0,1,2\}$ such that for all $x,y\in X$,
if $\nu(x)=\nu(y)$, then $f(x)\neq y$.\end{lemma}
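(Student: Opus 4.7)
The plan is to reformulate the condition and then use the structure of functional digraphs. First, I observe that the hypothesis ``$\nu(x)=\nu(y)$ implies $f(x)\neq y$'' is equivalent to the single condition that $\nu(x)\neq\nu(f(x))$ for every $x\in X$ (necessity comes from taking $y=f(x)$, and sufficiency is immediate). So the task reduces to properly $3$-colouring the functional digraph $D$ on $X$ whose edges are the pairs $\{x,f(x)\}$ (viewed as unordered edges, since the constraint is symmetric in how it restricts $\nu$).

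Second, I would describe the structure of $D$. Each weakly connected component of a functional digraph has exactly one directed cycle, with the remaining vertices forming rooted trees hanging off the cycle (rooted at the cycle vertex each tree attaches to, with parent of $x$ equal to $f(x)$). Because $f$ has no fixed points, every cycle has length at least $2$.

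Third, I would colour each component separately. For the cycle of length $\ell\geq 2$: if $\ell$ is even, alternate colours $0,1$ around it; if $\ell$ is odd (so $\ell\geq 3$), use $0,1,0,1,\ldots,0,1,2$, which is a proper $3$-colouring of the cycle. Then extend to the trees by induction on distance from the cycle: each non-cycle vertex $x$ has its parent $f(x)$ already coloured, and we simply pick $\nu(x)\in\{0,1,2\}\setminus\{\nu(f(x))\}$ (two valid choices remain). This gives $\nu(x)\neq\nu(f(x))$ throughout $X$, as required.

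The only real obstacle is the handling of odd cycles, which is exactly where the third colour is needed; a two-colouring would fail precisely on odd-length cycles, so the bound of three colours is tight. Everything else (tree extension, correctness check on cycle edges) is routine induction.
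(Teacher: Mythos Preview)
Your argument is correct: the reduction to the single constraint $\nu(x)\neq\nu(f(x))$ is valid, and the standard decomposition of a finite functional digraph into a cycle with attached in-trees, together with the greedy $3$-colouring (alternating on even cycles, using a third colour once on odd cycles, then extending outward along the trees), does the job. The paper itself gives no proof of this lemma, leaving it as an easy exercise, so there is nothing to compare against; your write-up is a clean and complete solution.
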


(We mention in passing that the result also holds if $X$ is infinite: this was proved by
Katet\v ov \cite{K}.)

We let $\bep$ be the set of primes.
Throughout this section we will utilise the function $\phi$ which
we now define. 

\begin{definition}\label{defphi} Given $x\in\beq\setminus\bez$, write
$x=\frac{b}{c}$, where $b\in\bez$, $c\in \ben$,
and $b$ and $c$ are relatively prime.  Let
$F$ be the set of prime factors of $c$ and for
$p\in F$, let $n_p(x)$ be the power of $p$ in the
prime factorisation of $c$.  For $p\in F$, let
$a_p(x)$ be the unique member of $\{1,2,3,\ldots,p^{n_p(x)}-1\}$
such that 
$$\textstyle x\equiv\sum_{p\in F}\displaystyle\frac{a_p(x)}{p^{n_p(x)}}
\ (\mod\ \bez)\,.$$
For $p\in\bep\setminus F$, let $n_p(x)=0$ and $a_p(x)=0$.

If $x\in\bez$, put $a_p(x)=n_p(x)=0$ for every $p\in\bep$.

Define $\phi:\beq\to\beq$ by
$\phi(x)=\sum_{p\in\bep}\displaystyle\frac{a_p(x)}{p^{n_p(x)}}$.
\end{definition}

Note that if $p$ is one of the prime factors of the denominator 
of $x$, then $p$ does not divide $a_p(x)$.  In the proof
of Theorem \ref{thmGm}, it will be useful to note that, given 
$x$ and $y$ in $\beq$, one can compute $\phi(x+y)$ as follows.
Add the terms of $\phi(x)$ and $\phi(y)$ corresponding to the primes $p$ that divide the denominator
of $x$ or $y$ and reduce to lowest terms. If the result is $\frac{a}{p^t}$, subtract multiples of
$p^t$ from $a$ until the numerator is less than $p^t$. 

Observe that if $n_p(x)=n_p(y)$, then $n_p(x+y)\leq n_p(y)$, and equality may or may not hold.
If $n_p(x)<n_p(y)$, then $n_p(x+y)= n_p(y)$ and, by a trivial calculation,
$a_p(x+y)\equiv a_p(y)\  (\mod\ p^{n_p(y)-n_p(x)}$).

For the remainder of this section, we fix $k\in\ben\setminus\{1\}$.

\begin{definition}\label{defrp} Let $P_1=\{p\in\bep:p\hbox{ divides }k\}$ and
let $P_2=\bep\setminus P_1$.  For $p\in \bep$, let
$r_p=\min\{t\in\ben:p^t>k\}$ and let $U_p=\{a\in\bez:(a,p)=1\}$.
\end{definition}

\begin{lemma}\label{psip} Let $p\in P_2$.  Then there is a 
function $\psi_p:U_p\to\{0,1,2\}$ such that, if
$a,y\in U_p$ and $y\equiv ka\ (\mod\ p^{r_p})$,
then $\psi_p(a)\neq \psi_p(y)$.\end{lemma}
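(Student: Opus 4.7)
The plan is to reduce the statement on the infinite set $U_p$ to a statement about the finite group $(\bez/p^{r_p}\bez)^*$ and then apply Lemma \ref{nofix}.

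First I would observe that the relation ``$y\equiv ka\ (\mod\ p^{r_p})$'' depends only on the residues of $a$ and $y$ modulo $p^{r_p}$. So it is natural to factor through the quotient map $U_p\to (\bez/p^{r_p}\bez)^*$ (the image lies in the units because $(a,p)=1$). Since $p\in P_2$, i.e.\ $p$ does not divide $k$, the element $k$ is a unit modulo $p^{r_p}$, so the map $f:(\bez/p^{r_p}\bez)^*\to(\bez/p^{r_p}\bez)^*$ given by $f(\overline{a})=\overline{ka}$ is a well-defined permutation of a finite set.

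Next I would verify that $f$ has no fixed points. A fixed point would satisfy $(k-1)a\equiv 0\ (\mod\ p^{r_p})$ with $(a,p)=1$, forcing $p^{r_p}\mid k-1$. But $k\geq 2$, so $1\leq k-1<k<p^{r_p}$ by the definition of $r_p$, and divisibility is impossible. Hence $f$ is fixed-point free.

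Now I would apply Lemma \ref{nofix} to the finite set $X=(\bez/p^{r_p}\bez)^*$ and the map $f$, obtaining $\nu:X\to\{0,1,2\}$ such that $\nu(\overline{a})=\nu(\overline{y})$ implies $\overline{y}\neq f(\overline{a})=\overline{ka}$. Defining $\psi_p:U_p\to\{0,1,2\}$ by $\psi_p(a)=\nu(a\ \mod\ p^{r_p})$ gives the required colouring: if $a,y\in U_p$ with $y\equiv ka\ (\mod\ p^{r_p})$, then their images in $X$ are related by $f$, so their $\nu$-values differ, and hence so do their $\psi_p$-values.

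There is essentially no obstacle here; the only thing to notice is the (small) calculation ensuring that multiplication by $k$ acts without fixed points on $(\bez/p^{r_p}\bez)^*$, which is precisely what the choice $p^{r_p}>k$ in Definition \ref{defrp} guarantees. Everything else is a direct application of Lemma \ref{nofix}.
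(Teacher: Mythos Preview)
Your proof is correct and follows essentially the same approach as the paper's: both reduce modulo $p^{r_p}$ to a finite set of units, observe that multiplication by $k$ is fixed-point free there because $0<k-1<p^{r_p}$, and then invoke Lemma~\ref{nofix}. The only cosmetic difference is that the paper works with explicit representatives $V=U_p\cap\{1,\ldots,p^{r_p}-1\}$ rather than the abstract group $(\bez/p^{r_p}\bez)^*$.
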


\begin{proof} Let $V=U_p\cap\nhat{p^{r_p}-1}$.
 Define $l_p:V\to V$, by $l_p(a)\equiv ka\ (\mod\ p^{r_p})$.
Given $a\in V$, since $p^{r_p}$ does not divide $k-1$ and $p$ does not divide $a$, the 
map $l_p$ has no fixed points. Let $\nu$ be as guaranteed
by Lemma \ref{nofix}, define $h_p:U_p\to V$ by $h_p(x)\equiv x\ (\mod\ p^{r_p})$,
and let $\psi_p=\nu\circ h_p$.
\end{proof}

We believe that it is possible to reduce the number of colours 
used in the following theorem at least to 18 (by adjusting the definition of
$\theta$ and combining the definitions of $f$ and $g$ in that proof).  
However, from our point of view the important 
fact is that the number of colours does not depend on $m$. Indeed, this is absolutely
critical to our proof: the kind of `stepping-up' that we will later use could not get started
if the number of colours needed grew with $m$.

\begin{theorem}\label{thmGm} Let $m\in\ben$ and let 
$G=\bigoplus_{i=0}^{m-1}\beq$.  Then there is a colouring
of $G$ in $72$ colours so that, if $\vec u\in G$ and
$X$ is an infinite subset of $G$, then there is some
$\vec x\in X$ such that $\vec x+\vec u$ and $k\vec x$ have
different colours.\end{theorem}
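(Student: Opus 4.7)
My plan is to build $\chi$ as a Cartesian product of several sub-colourings, each tailored to a ``regime'' of $\vec x \in G$, and then argue by a case analysis on which regime contains infinitely many elements of a given infinite $X$. The central tool in the dominant case is $\psi_p$ from Lemma \ref{psip}, applied to the largest prime in the denominator of $\vec x$; auxiliary components will use Lemma \ref{nofix} (for the $P_1$-dominated regime) and a modular log-based colouring (for the integer regime, in the spirit of Theorem \ref{kandm}). The target bound $72$ should arise as a product such as $3 \cdot 3 \cdot 4 \cdot 2$ over four components.

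First I would define the principal sub-colouring. For $\vec x \in G \setminus \bez^m$, let $\pi(\vec x)$ be the largest prime dividing the denominator of some coordinate $\vec x_i$, and $j(\vec x)$ the least index $i$ achieving $n_{\pi(\vec x)}(\vec x_i) = \max_{i'} n_{\pi(\vec x)}(\vec x_{i'})$; let $c_1(\vec x) = \psi_{\pi(\vec x)}\bigl(a_{\pi(\vec x)}(\vec x_{j(\vec x)})\bigr)$ when $\pi(\vec x) \in P_2$, and $c_1(\vec x) = 0$ otherwise. The key observation is that, for any $\vec u \in G$, if $\pi(\vec x) > \max(k,\pi(\vec u))$ then automatically $\pi(\vec x) \in P_2$ (primes in $P_1$ are all $\leq k$), $r_{\pi(\vec x)}=1$, and the description of $\phi$ under addition and multiplication given just before the theorem statement yields $\pi(\vec x+\vec u)=\pi(\vec x)=\pi(k\vec x)$ with equal values of $j$, together with the congruences $a_{\pi}(\vec x_j+\vec u_j)\equiv a_{\pi}(\vec x_j)$ and $a_{\pi}(k\vec x_j)\equiv k\,a_{\pi}(\vec x_j) \pmod{\pi(\vec x)}$. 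Lemma \ref{psip} then forces $c_1(\vec x+\vec u)\neq c_1(k\vec x)$, so if infinitely many $\vec x \in X$ satisfy $\pi(\vec x)>\max(k,\pi(\vec u))$ we are done with $c_1$ alone.

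Otherwise, cofinitely many $\vec x \in X$ have denominator primes in the finite set $S=\{p\leq\max(k,\pi(\vec u))\}$, and by pigeonhole an infinite subsequence falls into one of three sub-regimes: $\vec x \in \bez^m$; $\pi(\vec x) \in P_1$; or $\pi(\vec x) = p_0$ for some fixed $p_0 \in P_2\cap S$. The integer sub-regime is handled by $c_3(\vec x)=\lfloor\log_{k^{1/2}}\|\vec x\|_\infty\rfloor \bmod 4$: for $\|\vec x\|_\infty$ large compared to $\|\vec u\|_\infty$, $c_3(k\vec x)-c_3(\vec x)\equiv 2\pmod 4$ while $c_3(\vec x+\vec u)-c_3(\vec x)\in\{-1,0,1\}\pmod 4$, so $c_3(k\vec x)\neq c_3(\vec x+\vec u)$. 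The $P_1$-sub-regime is handled by a sub-colouring $c_2$ obtained by applying Lemma \ref{nofix} to a finite quotient on which multiplication by $k$ is fixed-point-free, exploiting the fact that $n_{\pi}(\vec x)$ strictly drops under $\vec x \mapsto k\vec x$ whenever $\pi \in P_1$. The remaining fixed-$p_0$ sub-regime can be absorbed either by enlarging $c_1$ to handle the finitely many ``small'' primes or by a further pigeonhole and an adapted use of $\psi_{p_0}$.

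The main obstacle is that the operations $\vec x \mapsto \vec x+\vec u$ and $\vec x \mapsto k\vec x$ can move $\vec x$ between regimes: an integer $\vec x$ can be translated into a non-integer; a $P_1$-dominated $\vec x$ may have $k\vec x\in\bez^m$; and when $\pi(\vec x)=p_0$ but $\pi(\vec u)>p_0$, then $\pi(\vec x+\vec u)=\pi(\vec u)>p_0=\pi(k\vec x)$, so $c_1$ picks different primes on the two sides. A binary case-indicator component $c_4$, together with a careful pigeonhole extraction of an infinite subsequence of $X$ on which regime-membership is consistent under both operations, is needed before the appropriate sub-colouring can be compared on both sides. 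This regime-consistency argument is the crux of the proof and accounts for the extra factors in the colour count $72$.
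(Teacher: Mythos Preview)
Your architecture---a product of sub-colourings, one per regime, with a case analysis on $X$---matches the paper's, and your $c_1$ and $c_3$ are essentially the paper's $f$ and $\theta$. But there is a genuine gap in the ``fixed $p_0\in P_2\cap S$'' sub-case, and a misdiagnosis of where the real difficulty lies.

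The gap: once the largest denominator prime $\pi(\vec x)$ is bounded over an infinite subset of $X$ but some coordinate still carries a $P_2$-prime with unbounded exponent, your $c_1$ need not separate $\vec u+\vec x$ from $k\vec x$: as you yourself note, $c_1$ may key to $\pi(\vec u)$ on one side and to $p_0$ on the other. Neither of your suggested fixes works. ``Enlarging $c_1$ to handle the small primes'' would mean recording $\psi_p$ for every prime up to $\pi(\vec u)$, but the colouring is fixed before $\vec u$ is chosen; and ``an adapted use of $\psi_{p_0}$'' requires a component of the colouring that reads off $p_0$, which again depends on $\vec u$. The paper supplies the missing idea: a \emph{second} $\psi$-based sub-colouring $g$ keyed not to the largest prime but to the largest \emph{exponent} $N(\vec x)=\max\{n_p(x_i):p\in P_2,\ i<m\}$. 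If $N(\vec x)>N(\vec u)+r_2$ then the pair $(p,j)$ realising $N$ is the same for $\vec u+\vec x$ and for $k\vec x$, and Lemma~\ref{psip} separates them. This is not a detail to be filled in later; it is one of the four components, and without it (or an equivalent) the case analysis does not close.

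The misdiagnosis: the ``regime-crossing'' worry in your final paragraph largely dissolves once the argument is organised as in the paper. All four components $f,g,h,\theta$ are total functions on $G$; the case split (is $M[X]$, $N[X]$, or $L[X]$ unbounded? if none, the fractional-part map $\vec x\mapsto(\phi(x_0),\ldots,\phi(x_{m-1}))$ has finite image and $\theta$ applies) merely selects \emph{which} component is guaranteed to separate some $\vec u+\vec x$ from $k\vec x$. No binary indicator $c_4$ is needed, and the paper has none; its $72$ is $3\cdot 3\cdot 2\cdot 4$ from $f,g,h,\theta$. For the $P_1$ regime your instinct that $n_p$ drops under multiplication by $k$ is correct, but Lemma~\ref{nofix} is the wrong tool---there is no finite quotient in sight, since exponents are unbounded. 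The paper instead sets $h(\vec x)\equiv L(\vec x)\pmod 2$ with $L(\vec x)=\max_{p\in P_1,\,i}\lfloor n_p(x_i)/m_p\rfloor$: multiplication by $k$ decreases $L$ by exactly $1$, while adding $\vec u$ leaves $L$ unchanged once $L(\vec x)>L(\vec u)$.
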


\begin{proof} We define $3$-colourings $f$ and $g$ of
$G$, a $2$-colouring $h$ of $G$, and a $4$-colouring
$\theta$ of $G$.

For $\vec x\in G$, let $S(\vec x\,)=\{p\in\bep:n_p(x_i)>0 
\hbox{ for some } i\in\ohat{m-1}\}$ and
if $S(\vec x\,)\neq\emp$, let $M(\vec x\,)=\max S(\vec x\,)$ and let 
$$\ell(\vec x\,)=\max\{i\in\ohat{m-1}:n_{M(\vec x\,)}(x_i)>0\}\,.$$
If $S(\vec x\,)=\emp$ (i.e., if $\vec x\in\bez^m$), let $M(\vec x\,)=f(\vec x\,)=0$.
Now assume that $S(\vec x\,)\neq\emp$, let $p=M(\vec x\,)$,
and let $j=\ell(\vec x\,)$. If $p\in P_1$, let $f(\vec x\,)=0$.
If $p\in P_2$, let $f(\vec x\,)=\psi_p\big(a_p(x_j)\big)$, 
where $\psi_p$ is as in Lemma \ref{psip}.

For $\vec x\in G$, let $N(\vec x\,)=\max\{n_p(x_i):i\in\ohat{m-1}\hbox{ and }p\in P_2\}$.
If $N(\vec x\,)=0$, let $g(\vec x\,)=0$. Now assume that $N(\vec x\,)>0$, let
$$p=\max\big\{q\in P_2:n_q(x_i)=N(\vec x\,) 
\hbox{ for some } i\in\ohat{m-1} \big\}\,,$$ let
$j=\max\{i\in\ohat{m-1}:n_p(x_i)=N(\vec x\,)\}$, and let
$g(\vec x\,)=\psi_p\big(a_p(x_j)\big)$.

For $p\in P_1$, let $m_p=\max\{t\in\ben:p^t\hbox{ divides }k\}$.
For $\vec x\in G$, let $L(\vec x\,)=\max\{\lfloor n_p(x_i)/m_p\rfloor:i\in\ohat{m-1} 
\hbox{ and }p\in P_1\}$ and define $h(\vec x\,)\in\{0,1\}$ by
$h(\vec x\,)\equiv L(\vec x\,)\ (\mod\ 2)$.

Define $\theta:G\to\{0,1,2,3\}$ by, for $\vec x\in G$,
$$\textstyle\theta(\vec x\,)\equiv\lfloor \log_{\sqrt k}(\sum_{i=0}^{m-1}|x_i|)\rfloor\ (\mod\ 4)\,.$$

Define a $72$-colouring $\gamma$ of $G$ by, for $\vec x,\vec y\in G$,
$\gamma(\vec x\,)=\gamma(\vec y\,)$ if and only if $f$, $g$, $h$, and 
$\theta$ all agree at $\vec x$ and $\vec y$.

Let $\vec u\in G$ and infinite $X\subseteq G$ be given. 
Notice that, if $M[X]$, $N[X]$, and $L[X]$ are all finite, then
$\big\{\big(\phi(x_0), \phi(x_1),\ldots,\phi(x_{m-1})\big):\vec x\in X\}$
is finite.  Thus one of the following four cases must hold.
\begin{itemize}
\item[(I)] $\big\{\big(\phi(x_0), \phi(x_1),\ldots,\phi(x_{m-1})\big):\vec x\in X\}$
is finite;
\item[(II)] $M[X]$ is infinite;
\item[(III)] $N[X]$ is infinite; or
\item[(IV)] $L[X]$ is infinite.
\end{itemize}  
We will show that in each of these cases there is some $\vec x\in X$ such
that $\gamma(\vec u+\vec x\,)\neq \gamma(k\vec x\,)$.

Case (I).  We may assume that the mapping $\vec x\mapsto \big(\phi(x_0), \phi(x_1),\ldots,\phi(x_{m-1})\big)$ is constant on $X$.  
Then, for any $\vec x,\vec y\in X$,
$\vec y\in \vec x+\bez^m$.  It follows that, if $\sigma:G\to \ber$ is defined by
$\sigma(\vec x\,)=\sum_{i=0}^{m-1}|x_i|$, then $\sigma[X]$ is unbounded. 
For any $\vec u\in G$, we have $\sigma(\vec x\,)-\sigma(\vec u\,)\leq \sigma(\vec u+\vec x\,)
\leq\sigma(\vec x\,)+\sigma(\vec u\,)$. So, given $\vec u\in G$, we can choose $\vec x\in X$ such that
$$\log_{\sqrt{k}}\big(\sigma(\vec x\,)\big)-1<\log_{\sqrt{k}}\big(\sigma(\vec u+\vec x\,)\big)
<\log_{\sqrt{k}}\big(\sigma(\vec x\,)\big)+1\,.$$ 
 Since $\log_{\sqrt{k}}(k\vec x\,)=\log_{\sqrt{k}}(\vec x\,)+2$, it follows that 
$\theta(\vec u+\vec x\,)\neq \theta(\vec x\,)$.

Case (II). Pick $\vec x\in X$ such that $p=M(\vec x\,)>M(\vec u\,)$ and
$p>k$.  Let $j=\ell(\vec x)$.
Then $p=M(\vec u+\vec x\,)$, $j=\ell(\vec u+\vec x\,)$,
and $a_p(u_{j}+x_{j})=a_p(x_{j})$. 
Also, because $p\in P_2$, $p= M(k\vec x\,)$ and $j=\ell(k\vec x\,)$. Since
$a_p(kx_{j})\equiv ka_p(x_{j})\ (\mod\ p^{n_p(x_{j})})$, it follows that
$a_p(kx_{j})\equiv ka_p(x_{j})\ (\mod\  p^{r_p})$ because $r_p=1$.
So, by Lemma \ref{psip}, $f(\vec u+\vec x\,)\neq f(k\vec x)$.

Case (III). We can choose  $\vec x\in X$ such that $N(\vec x\,)>N(\vec u\,)+r_2$, and so
$N(\vec x\,)>N(\vec u\,)+r_p$ for every $p\in\bep$.
Let $$p=\max\big\{q\in P_2:n_q(x_i)=N(\vec x\,) \hbox{ for some } i\in\ohat{m-1} \big\}$$
and let $j=\max\{i\in\ohat{m-1}:n_p(x_i)=N(\vec x\,)\}$.  Now, if
$q\in P_2$, $q>p$ and $i\in\ohat{m-1}$, then
$n_q(x_i)<n_p(x_j)$ and $n_q(u_i)\leq N(\vec u\,)<n_p(x_j)$ so
$n_q(u_i+x_i)\leq \max\{n_q(u_i),n_q(x_i)\}<n_p(x_j)$.
Likewise, if $i\in\ohat{m-1}$ and $i>j$,
then $n_p(x_i)<n_p(x_j)$ and $n_p(u_i)<n_p(x_j)$.  Thus,
$N(\vec u+\vec x\,)=N(\vec x\,)$, 
$$p=\max\big\{q\in P_2:n_q(u_i+x_i)=N(\vec u+\vec x\,) \hbox{ for some } i\in\ohat{m-1} \big\}\,,$$
and $j=\max\{i\in\ohat{m-1}:n_p(u_i+x_i)=N(\vec u+\vec x\,)\}$.

Note that, for every $q\in P_2$ and every $x\in \beq$, $n_q(kx)=n_q(x)$. 
Thus $N(k\vec x\,)=N(\vec x)$, $p=\max\big\{q\in P_2:(\exists i\in\ohat{m-1})\big(n_q(kx_i)=N(k\vec x\,)\big)\big\}\,,$
and $j=\max\{i\in\ohat{m-1}:n_p(kx_i)=N(k\vec x\,)\}$

We have observed that $a_p(u_j+x_j)\equiv a_p(x_j)\ (\mod\ p^{n_p(x_j)-n_p(u_j)}$),
and so $a_p(u_j+x_j)\equiv a_p(x_j)\ (\mod\ p^{r_p})$ because $n_p(x_j)-n_p(u_j)>r_p$.
Also, $a_p(kx_j)\equiv ka_p(x_j)\ (\mod\ p^{n_p})$ and so 
$a_p(kx_j)\equiv ka_p(x_j)\ (\mod\ p^{r_p}$). It follows from Lemma \ref{psip} 
that $g(\vec u+\vec x)\neq g(k\vec x)$.

Case (IV). Using the pigeonhole principle, we may presume that we have $p\in P_1$ and
$j\in\ohat{m-1}$ such that for all $\vec x\in X$, 
$$p=\max\big\{q\in P_1:\lfloor n_q(x_i)/m_q\rfloor=L(\vec x\,) \hbox{ for some } 
i\in\ohat{m-1} \big\}$$
and $j=\max\{i\in\ohat{m-1}:\lfloor n_p(x_i)/m_p\rfloor=L(\vec x\,)\}$.
Pick $\vec x\in X$ such that $L(\vec x\,)>L(\vec u)$ and let
$l=\lfloor n_p(x_j)/m_p\rfloor=L(\vec x\,)$.  We show first
that $L(\vec u+\vec x)=l$.  Since $n_p(x_j)>n_p(u_j)$,
we have $n_p(u_j+x_j)=n_p(x_j)$ so $\lfloor n_p(u_j+x_j)/m_p\rfloor=l$
so $l\leq L(\vec u+\vec x)$.  Suppose we have $q\in P_1$
and $i\in \ohat{m-1}$ such that $l<\lfloor n_q(u_i+x_i)/m_q\rfloor$.
Since $n_q(u_i+x_i)\leq\max\{n_q(x_i),n_q(u_i)\}$,
we have $l<\lfloor n_q(u_i)/m_q\rfloor\leq L(\vec u\,)<L(\vec x\,)$, 
a contradiction. 

Now we claim that $L(k\vec x)=l-1$.  For any $q\in P_1$ and $i\in\ohat{m-1}$,
$n_q(kx_i)=\max\{n_q(x_i)-m_q,0\}$ so
$\lfloor n_q(kx_i)/m_q\rfloor\leq \lfloor n_q(x_i)/m_q\rfloor -1\leq l-1$.
Also since $n_p(x_j)\geq m_p$, $\lfloor n_p(kx_j)/m_p\rfloor = \lfloor n_p(x_j)/m_p\rfloor -1= l-1$.
Thus $h(\vec u+\vec x\,)\neq h(k\vec x\,)$.
\end{proof}

Notice that the case $m=1$ of Theorem \ref{thmGm} establishes
that $\beq$ can be finitely coloured so that there is 
no infinite $X$ with $kX$ monochromatic.

\begin{definition}\label{defGlambda} For each cardinal $\kappa>0$, $G(\kappa)=\bigoplus_{\sigma<\kappa}\beq$.
\end{definition}

\begin{lemma}\label{Gkappa} Let $\kappa$ be an infinite cardinal. Assume that there exists $n\in \ben$ such that, for every cardinal
$\lambda$ with $1\leq\lambda<\kappa$, there is an $n$-colouring of 
$G(\lambda)$ with the property that there is no infinite subset $X$ of $G(\lambda)$ for which
$FS_k\langle X\rangle \cup \{k\vec x:\vec x\in X\}$ is monochromatic. 
Then there is a $2n$-colouring of $G(\kappa)$ such that there is no infinite subset $X$ of $G(\kappa)$ for which
$FS_k\langle X\rangle \cup \{k\vec x:\vec x\in X\}$ is monochromatic. 
\end{lemma}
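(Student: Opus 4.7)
The plan is to build the $2n$-colouring of $G(\kappa)$ as a pair
$\bigl(b(\vec x), a(\vec x)\bigr)$, where $b$ takes $n$ values (using the
hypothesised $n$-colourings of the smaller groups $G(\lambda)$) and
$a\in\{0,1\}$ is a 2-colouring derived from the top coefficient of $\vec x$.
The component $b$ will kill a hypothetical monochromatic set whose supports
remain bounded below $\kappa$, while $a$ will kill the complementary case in
which the supports spread out.

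First, for each ordinal $\sigma$ with $0<\sigma<\kappa$, I would fix a
bijection between $\sigma$ and the cardinal $|\sigma|<\kappa$ and transport
the hypothesised $n$-colouring $c_{|\sigma|}$ of $G(|\sigma|)$ to an
$n$-colouring $\tilde c_\sigma$ of $G(\sigma)$ enjoying the same
non-monochromaticity property.  Then for $\vec x\in G(\kappa)\setminus\{\vec 0\}$,
writing $\mu(\vec x)=\max\supp(\vec x)$ and
$q(\vec x)=\pi_{\mu(\vec x)}(\vec x)\in\beq\setminus\{0\}$ for the top
coefficient, I would define
\[
  c(\vec x)\;=\;\bigl(\,\tilde c_{\mu(\vec x)+1}(\vec x),\
  \lfloor\log_k|q(\vec x)|\rfloor\ (\mod\ 2)\,\bigr),
\]
and colour $\vec 0$ arbitrarily.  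Since $\kappa$ is an infinite cardinal
(hence a limit ordinal), $\mu(\vec x)+1<\kappa$ and $\vec x\in G(\mu(\vec x)+1)$,
so $\tilde c_{\mu(\vec x)+1}(\vec x)$ is well defined, and $c$ uses $2n$
colours in all.

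Next, I would suppose for contradiction that some infinite
$X\subseteq G(\kappa)\setminus\{\vec 0\}$ has
$FS_k\langle X\rangle\cup\{k\vec x:\vec x\in X\}$ $c$-monochromatic, and
split on the size of $M=\{\mu(\vec x):\vec x\in X\}$.  If $M$ is infinite, I
would extract a sequence $\vec x_1,\vec x_2,\ldots\in X$ with
$\mu(\vec x_1)<\mu(\vec x_2)<\cdots$; for each $i\geq k$ the sum
$\vec x_1+\cdots+\vec x_{k-1}+\vec x_i$ has top coefficient $q(\vec x_i)$
(only $\vec x_i$ contributes at position $\mu(\vec x_i)$), whereas $k\vec x_i$
has top coefficient $kq(\vec x_i)$, so their $a$-values differ by
$1\ (\mod\ 2)$, contradicting monochromaticity.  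If $M$ is finite, a first
pigeonhole gives an infinite $X^{*}\subseteq X$ with $\mu(\vec x)=\sigma^{*}$
constant, and a second pigeonhole on the sign of $q(\vec x)$ gives an
infinite $X^{**}\subseteq X^{*}$ with $q(\vec x)$ of constant sign; then no
top-coefficient cancellation can occur in
$FS_k\langle X^{**}\rangle\cup\{k\vec x:\vec x\in X^{**}\}$, so every such
$\vec s$ has $\mu(\vec s)=\sigma^{*}$ and
$b(\vec s)=\tilde c_{\sigma^{*}+1}(\vec s)$.  The $b$-monochromaticity
therefore exhibits an infinite subset $X^{**}\subseteq G(\sigma^{*}+1)$
violating the defining property of $\tilde c_{\sigma^{*}+1}$.

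The main obstacle is that the first coordinate
$b(\vec x)=\tilde c_{\mu(\vec x)+1}(\vec x)$ depends on $\vec x$ through
$\mu(\vec x)$; whenever the top coefficients of the summands in a sum
$\vec s$ cancel, $\vec s$ is coloured by a strictly smaller $\tilde c$, and
the hypothesis cannot be invoked uniformly.  The sign-extraction passing
from $X^{*}$ to $X^{**}$ is exactly what rules this out, allowing the
finite-$M$ case to be closed with a single $\tilde c_{\sigma^{*}+1}$.
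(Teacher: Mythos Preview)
Your proof is correct and follows essentially the same plan as the paper's: the $2n$-colouring is the pair of a transported $n$-colouring of $G(\mu(\vec x)+1)$ together with a $2$-colouring of the top coefficient that separates $t$ from $kt$, and the contradiction is obtained by the same dichotomy on whether $\{\mu(\vec x):\vec x\in X\}$ is finite or infinite.

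One point is worth noting. In the bounded case the paper, having pigeonholed down to $\mu[X]=\{\alpha\}$, simply asserts that $FS_k\langle X\rangle\cup\{k\vec x:\vec x\in X\}$ is monochromatic with respect to the inner colouring $\gamma_\alpha$. As written this glosses over the possibility that the $\alpha$-coordinates of a $k$-sum cancel, in which case that sum has $\mu<\alpha$ and its first colour-coordinate is $\gamma_{\mu}(\cdot)$ rather than $\gamma_\alpha(\cdot)$. Your additional pigeonhole on the sign of the top coefficient, passing from $X^{*}$ to $X^{**}$, is exactly what rules out such cancellation and makes the appeal to the inner colouring legitimate; so your version of the argument is in fact slightly more careful than the paper's at this step.
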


\begin{proof} Let $\xi$ be a $2$-colouring of $\beq\setminus\{0\}$, such that
$t$ and $kt$ have different colours for every $t\in\beq\setminus\{0\}$.  For
$\vec x\in G(\kappa)\setminus\{\vec 0\,\}$, let $\mu(\vec x\,)=\max\{\sigma<\kappa:x_\sigma\neq 0\}$
and let $\eta(\vec x)=x_{\mu(\vec x\,)}$.  For each $\alpha<\kappa$, let
$H_\alpha=\{\vec 0\,\}\cup\{\vec x\in G(\kappa)\setminus\{\vec 0\,\}:\mu(\vec x\,)\leq\alpha\}$.
Then $H_\alpha$ is isomorphic to $G(|\alpha+1|)$ so pick an $n$-colouring $\gamma_\alpha$ of
$H_\alpha$ such that there is no infinite subset $X$ of $H_\alpha$ for which
$FS_k\langle X\rangle \cup \{k\vec x:\vec x\in X\}$ is monochromatic. 
Define a $2n$-colouring $\tau$ of $G(\kappa)$ by, for 
$\vec x\in G(\kappa)\setminus\{\vec 0\,\}$, $\tau(\vec x\,)=\big(\gamma_{\mu(\vec x\,)}(\vec x\,),\xi\big(\eta(\vec x)\big)\big)$,
assigning $\tau(\vec 0\,)$ arbitrarily.

Suppose we have an infinite subset $X$ of $G(\kappa)$ for which
$FS_k\langle X\rangle \cup \{k\vec x:\vec x\in X\}$ is monochromatic with respect to $\tau$.
We may assume $\vec 0\notin X$.
Suppose first that $|\mu[X]|\geq k$ and pick $\vec x_1,\vec x_2,\ldots,\vec x_k$ in $X$
such that $\mu(\vec x_1)<\mu(\vec x_2)<\ldots<\mu(\vec x_k)$.
Then $\xi\big(\eta(\vec x_1+\ldots+\vec x_k)\big)=\xi\big(\eta(\vec x_k)\big)
\neq \xi\big(k\eta(\vec x_k)\big)=\xi\big(\eta(k\vec x_k)\big)$, a contradiction.
Thus, by the pigeonhole principle, we may assume we have $\alpha<\kappa$ such that
$\mu[X]=\{\alpha\}$.  Then $X$ is an infinite subset of $H_\alpha$ such that
$FS_k\langle X\rangle \cup \{k\vec x:\vec x\in X\}$ is monochromatic with respect to
$\gamma_\alpha$, a contradiction.\end{proof}

Since $\ber$ is isomorphic to $G(\gc)$, the following theorem provides a CH proof
that there is a finite colouring of $\ber$ with the property that there is no 
infinite subset $X$ of $\ber$ for which $FS_k\langle X\rangle\cup \{kx:x\in X\}$ is monochromatic.
In fact, since the cofinality of $\gc$ is uncountable, the smallest possible value
for $\gc$ for which this assertion might fail is $\omega_{\omega+1}$.

\begin{theorem}\label{Gomegan} Let $n<\omega$.  
Then there is a colouring of $G(\omega_n)$ by $2^{4+n}\cdot 3^2$
colours such that there is no infinite subset $X$ of $G(\omega_1)$ for which 
$FS_k\langle X\rangle\cup \{k\vec x:\vec x\in X\}$ is monochromatic. 
In particular, there is no infinite subset $X$ of $G(\omega_n)$ for which $kX$ is monochromatic. \end{theorem}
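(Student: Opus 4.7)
The plan is to prove the theorem by induction on $n$, using Lemma \ref{Gkappa} at every stage and feeding in Theorem \ref{thmGm} at the bottom. Before entering the induction one must check that the conclusion of Theorem \ref{thmGm} really supplies the hypothesis that Lemma \ref{Gkappa} demands. Fix finite $m\geq 1$ and let $\gamma$ be the 72-colouring of $G(m)$ from Theorem \ref{thmGm}. If an infinite $X\subseteq G(m)$ had $FS_k\langle X\rangle\cup\{k\vec x:\vec x\in X\}$ monochromatic under $\gamma$, pick any $k-1$ distinct $\vec x_1,\ldots,\vec x_{k-1}\in X$, set $\vec u=\vec x_1+\ldots+\vec x_{k-1}$, and apply Theorem \ref{thmGm} to $\vec u$ and the infinite set $X\setminus\{\vec x_1,\ldots,\vec x_{k-1}\}$: it produces $\vec x$ with $\gamma(\vec u+\vec x\,)\neq\gamma(k\vec x\,)$, but $\vec u+\vec x\in FS_k\langle X\rangle$ and $k\vec x\in\{k\vec y:\vec y\in X\}$, a contradiction. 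So Theorem \ref{thmGm} already gives the finite-$\lambda$ instance of Lemma \ref{Gkappa}'s hypothesis, with 72 colours.

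For the base case $n=0$ I apply Lemma \ref{Gkappa} with $\kappa=\omega$: the cardinals $\lambda$ with $1\leq\lambda<\omega$ are all finite, each of the corresponding $G(\lambda)$ is coloured in 72 colours by Theorem \ref{thmGm} (with the property just verified), and the lemma delivers a $2\cdot 72=144=2^{4}\cdot 3^2$ colouring of $G(\omega_0)$. For the induction step, suppose the statement is already known up to index $n$; then for each $j\leq n$ there is a colouring of $G(\omega_j)$ in $2^{4+j}\cdot 3^2$ colours with the desired property, and for finite $\lambda$ we still have the 72-colouring (note $72=2^3\cdot 3^2\leq 2^{4+n}\cdot 3^2$). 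Padding with unused colours if necessary, every cardinal $\lambda<\omega_{n+1}$ thus carries a colouring in the common number $2^{4+n}\cdot 3^2$ of colours satisfying the required property, so Lemma \ref{Gkappa} applied with $\kappa=\omega_{n+1}$ yields a colouring of $G(\omega_{n+1})$ in $2\cdot 2^{4+n}\cdot 3^2=2^{4+(n+1)}\cdot 3^2$ colours, closing the induction.

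The ``In particular'' clause is immediate, because $kX\supseteq FS_k\langle X\rangle\cup\{k\vec x:\vec x\in X\}$: a sum of $k$ distinct elements of $X$ and any $k$-fold repeated sum $k\vec x$ both lie in $kX$, so a monochromatic $kX$ would force a monochromatic $FS_k\langle X\rangle\cup\{k\vec x:\vec x\in X\}$, contradicting what has just been proved. Everything here is bookkeeping once Lemma \ref{Gkappa} and Theorem \ref{thmGm} are in hand; the only thing one must be careful about is that the bound on the number of colours stays independent of $\lambda$ at each level of the induction, which is exactly why it was crucial that Theorem \ref{thmGm} used a fixed number of colours regardless of $m$, and why each invocation of Lemma \ref{Gkappa} multiplies the count by only a constant factor of $2$.
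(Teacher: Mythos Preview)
Your proof is correct and follows exactly the paper's approach: apply Lemma~\ref{Gkappa} with $\kappa=\omega$ using Theorem~\ref{thmGm} for the finite cardinals to obtain the $2^4\cdot 3^2$ colouring of $G(\omega)$, then induct via Lemma~\ref{Gkappa}. The paper's proof is terse and leaves implicit both the verification that Theorem~\ref{thmGm} yields the hypothesis of Lemma~\ref{Gkappa} and the padding argument in the induction step; you have spelled these out carefully, but the argument is the same.
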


\begin{proof} It follows from Theorem \ref{thmGm} and Lemma \ref{Gkappa} 
that there is a colouring of $G(\omega)$ by $2^4\cdot 3^2$ colours
such that there is no infinite subset $X$ of $G(\omega)$ for which $FS_k\langle X\rangle\cup \{kx:x\in X\}$ 
is monochromatic. The conclusion then follows by induction using Lemma \ref{Gkappa}.\end{proof}

\begin{question} Can one show in ZFC, without extra set theoretic assumptions, that there is 
a finite colouring of $\ber$ such that there is no infinite subset $X$ of $\ber$ for which
$FS_k\langle X\rangle\cup \{kx:x\in X\}$ is monchromatic? \end{question}

\section{Preventing uncountable $FS_k(X)$ in $\ber$}

It has been known at least since the publication of \cite{EHR} that
there is a two-colouring of $[\ber]^2$ such that no uncountable
$X$ has $[X]^2$ monochromatic.  Indeed, let $W$ be a given well-ordering
of $\ber$ and colour the pair $\{x,y\}$ colour $1$ if $x<y$ and $x\,W\,y$
and colour $2$ if $x<y$ and $y\,W\,x$.  No uncountable subset
of $\ber$ is either well-ordered or reverse well-ordered by $<$, because
between each element and its successor, there must be a rational. Thus one has
that there is no uncountable set $X$ with $[X]^2$ monochromatic. 

This makes the statement of Theorem \ref{main} very believable. However, it seems to be
significantly harder to show -- as witnessed by the fact that we are only able to show that there 
is no $X$ of size $\gc$.

We omit the routine proof of the following lemma.

\begin{lemma}\label{lemomega1}  Let $Y\subseteq \ber$ such that $|Y|=\omega_1$
and let $$A=\{x\in Y:|\{y\in Y:x<y\}|<\omega_1\}\,.$$
Then $|A|<\omega_1$.\end{lemma}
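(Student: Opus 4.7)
The plan is to argue by contradiction: assume $|A| = \omega_1$ and derive that $A$ must in fact be countable. The guiding idea is to exploit the density of $\beq$ in $\ber$ to locate a rational threshold governing where $A$ starts to be countable from above.

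I would set $T = \{r \in \beq : |A \cap (r,\infty)| \leq \omega\}$ and observe two easy facts. First, $T$ is nonempty: for any $x \in A$, every rational $r \geq x$ lies in $T$ because $A \cap (r,\infty) \subseteq A \cap (x,\infty)$ is countable by the definition of $A$. Second, $T$ is upward closed in $\beq$. Let $t = \inf T \in [-\infty, \infty)$.

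If $t = -\infty$, then $T = \beq$ and $A = \bigcup_{n \geq 1} A \cap (-n,\infty)$ is a countable union of countable sets, so $|A| \leq \omega$. Suppose instead $t \in \ber$. Applying the same union trick to a sequence of rationals $q_n \downarrow t$ (all in $T$) shows that $A \cap (t,\infty)$ is countable. On the other hand, $A \cap (-\infty, t)$ must be empty: any $x \in A$ with $x < t$ would allow the choice of a rational $q$ with $x < q < t$; such $q$ lies outside $T$, so $A \cap (q,\infty)$ is uncountable, yet $A \cap (q,\infty) \subseteq A \cap (x,\infty)$ is countable by the defining property of $A$, a contradiction. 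Hence $A \subseteq \{t\} \cup (t,\infty)$, which has at most $\omega$ elements. Either case contradicts $|A| = \omega_1$. There is no serious obstacle: once one thinks of approximating by rationals, this is essentially a Lindel\"of-style argument, which is presumably what the authors have in mind in calling the proof routine.
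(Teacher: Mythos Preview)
Your proof is correct. The paper omits the proof entirely, calling it routine, so there is nothing to compare against directly; your rational-threshold argument is a natural way to fill in the details, and the same idea can be phrased slightly more succinctly by observing that any subset of $\ber$ has a countable coinitial subset $C$, whence $A=\bigcup_{c\in C}\big(\{c\}\cup(A\cap(c,\infty))\big)$ is a countable union of countable sets.

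One small slip in the write-up: in the final line you say ``$A \subseteq \{t\} \cup (t,\infty)$, which has at most $\omega$ elements,'' but $\{t\}\cup(t,\infty)=[t,\infty)$ has cardinality $\gc$. What you mean is that $A=(A\cap\{t\})\cup\big(A\cap(t,\infty)\big)$, and this set has at most $\omega$ elements since you have already shown $A\cap(t,\infty)$ is countable. The mathematics is fine; only the phrasing needs tightening.
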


\begin{theorem}\label{main} There is a $2$-colouring of $\ber$ such that,
given any $k\in\ben\setminus\{1\}$, 
there does not exist a set $X\subseteq \ber$ with $|X|=\gc$
such that $FS_k(X)$ is monochromatic.
\end{theorem}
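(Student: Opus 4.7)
The plan is to invoke CH, fix a well-ordering $\prec$ of $\ber$ of order type $\omega_1$, and enumerate $\ber=\{r_\alpha:\alpha<\omega_1\}$. I would then construct the $2$-colouring $c:\ber\to\{0,1\}$ by transfinite recursion along $\prec$. At stage $\alpha$, with $c$ already defined on the countable initial segment $R_\alpha=\{r_\beta:\beta<\alpha\}$, the value $c(r_\alpha)$ would be chosen to diagonalise against the countably many ``local configurations'' visible at this stage: for each $k\ge 2$ and each pair of $k$-subsets of $R_\alpha\cup\{r_\alpha\}$ whose sums land in $R_\alpha\cup\{r_\alpha\}$, the choice of $c(r_\alpha)$ is made to obstruct a potential monochromatic $FS_k(X)$ for any uncountable $X$ extending the visible configuration.

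For the verification, suppose for contradiction that for some $k\ge 2$ and some $X\subseteq\ber$ with $|X|=\gc=\omega_1$, the set $FS_k(X)$ is monochromatic, say all of colour $0$. First I would apply Lemma~\ref{lemomega1} to $X$ and then symmetrically to $-X$, discarding countably many boundary elements to pass to an uncountable $X'\subseteq X$ such that every $x\in X'$ has uncountably many elements of $X'$ both above and below it in the usual order $<$. Since $X'$ is uncountable, $(X',<)$ cannot be a well-order (the rational trick underlying the EHR colouring recalled at the start of the section), so the well-ordering $\prec$ and the usual order $<$ must disagree non-trivially on $X'$. Exploiting this disagreement together with the density of $X'$ from both sides in $<$, I would locate $F_1,F_2\in [X']^k$ with $\sum F_1\ne\sum F_2$ whose sums the recursive construction was forced to colour differently, contradicting the assumed monochromaticity.

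The main obstacle is the coordination of the transfinite recursion: there are $2^{\omega_1}$ potential uncountable subsets $X$ but only $\omega_1$ stages available, so a single local diagonalisation at each stage must simultaneously handle uncountably many $X$'s. The delicate point is showing that Lemma~\ref{lemomega1}, combined with the well-ordering versus usual-ordering disagreement on $X'$, reduces every uncountable $X$ to a pattern genuinely seen at some stage of the recursion, so that the local choices are sufficient. Making this reduction precise, and ensuring that the local diagonalisation has enough freedom to defeat all $k\ge 2$ simultaneously with only $2$ colours, is the technical heart of the argument.
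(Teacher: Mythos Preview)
Your proposal is not yet a proof: it is an outline that names its own central obstacle without resolving it. A transfinite recursion of length $\omega_1$ that at each stage makes a single binary choice has only $\omega_1$ decisions available, while the family of uncountable $X\subseteq\ber$ it must defeat has size $2^{\omega_1}$. You say the ``delicate point'' is to show that every such $X$ reduces to a pattern seen at some stage, but you give no mechanism for this reduction: you do not specify which configurations are diagonalised at stage $\alpha$, nor how the choice of $c(r_\alpha)$, made when only countably many reals are visible, can obstruct sums $\sum F$ for $F\in[X]^k$ whose elements and whose sum may all appear later in the enumeration. Without this, there is no argument. There is also a cost to your framing: you assume CH to obtain a well-ordering of type $\omega_1$, but the theorem as stated concerns sets of size $\gc$, and the paper proves it in ZFC alone; CH would only be needed to downgrade ``size $\gc$'' to ``uncountable''.

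The paper takes a completely different route. It fixes a Hamel basis $\langle e_i\rangle_{i\in\ber}$ for $\ber$ over $\beq$ and a well-ordering $W$ of $\ber$ of order type $\gc$, and defines the colouring \emph{explicitly}, with no recursion: for $x\neq 0$ with support $S(x)=\{i_1<i_2<\ldots<i_m\}$ in the basis, let $t$ be the index for which $i_t$ is $W$-maximal and set $\psi(x)\equiv t\ (\mod\ 2)$. For the verification, a putative $X$ of size $\gc$ with $FS_k(X)$ monochromatic is refined by pigeonhole (fixed support size $m$, fixed interleaving rationals, fixed position $l$ of the $W$-maximal coordinate) down to an uncountable $Y$ whose $l$-th support coordinates $B=\{i(x,l):x\in Y\}$ are all distinct. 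One then checks directly that two $k$-sums, built from elements of $Y$ whose $l$-th coordinates are ordered oppositely by $<$ and by $W$, receive different values of $\psi$; hence $<$ and $W$ must agree or anti-agree on all of $B$, which is impossible for an uncountable $B\subseteq\ber$. The Hamel basis is precisely what allows a single explicit rule to carry enough structure to defeat every $X$ at once --- this is the idea your outline lacks.
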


\begin{proof} Fix a Hamel basis $\langle e_i\rangle_{i\in\ber}$ for
$\ber$ over $\beq$ and fix a well-ordering $W$ of $\ber$ of order
type $\gc$.  For each $x\in\ber$, let $S(x)\subseteq\ber$ and
$\alpha(x):S(x)\to\beq\setminus\{0\}$ such that
$x=\sum_{i\in S(x)}\alpha(x)(i)\cdot e_i$. (Then $S(0)=\emp$ and 
for all $x\in\ber\setminus\{0\}$, $S(x)$ is a finite nonempty 
subset of $\ber$, called the {\it support\/} of $x$.)

We now define a colouring $\psi:\ber\to\{0,1\}$. Let $\psi(0)=0$. 
Now let $x\in\ber\setminus\{0\}$ be given.  Let 
$m=|S(x)|$ and let $i_1<i_2<\ldots<i_m$ such that
$S(x)=\{i_1,i_2,\ldots,i_m\}$.  Pick $t\in\nhat{m}$ such that
for all $s\in\nhat{m}\setminus\{t\}$,
$i_s\,W\,i_t$. Let $\psi(x)\equiv t\ (\mod\ 2)$.

Let $k\in\ben\setminus\{1\}$ be given.  We shall write the
proof assuming that $k=2$, showing at the conclusion how to
modify the proof for larger values of $k$.

Suppose we have a set $X\in[\ber]^{\subgc}$ with 
$FS_2(X)$ monochromatic with respect to $\psi$. We shall repeatedly observe that
there are many elements of $X$ with a particular property
and assume then (by throwing the others away) that all elements
of $X$ have that property.  (At one stage in the proof, 
``many'' changes from $\gc$ to $\omega_1$.)

Since $\cf(\gc)>\omega$, we may presume that we have some 
$m\in\ben$ such that for all $x\in X$, $|S(x)|=m$.
For each $x\in X$, let $i(x,1)<i(x,2)<\ldots<i(x,m)$
such that $S(x)=\{i(x,1),i(x,2),\ldots,i(x,m)\}$.

Choose $I:[\ber]^m\to\beq^m$ such that if 
$i_1<i_2<\ldots<i_m$ and $I(\{i_1,i_2,\ldots,i_m\})\break
=(s_1,s_2,\ldots,s_m)$, then 
$s_1<i_1<s_2<i_2<\ldots<i_{m-1}<s_m<i_m$.  Again using the
fact that $\cf(\gc)>\omega$, we may presume that
we have $(s_1,s_2,\ldots,s_m)\in\beq^m$ such that
for all $x\in X$, we have $I\big(S(x)\big)=(s_1,s_2,\ldots,s_m)$.
As a consequence if $x,y\in X$ and $t\in\nhat{m-1}$, 
then $i(x,t)<s_{t+1}<i(y,t+1)$.

For each $x\in X$, let $\delta(x)\in\nhat{m}$ such that
$$\hbox{for all }j\in\nhat{m}\setminus\{\delta(x)\}\,,\,
i(x,j)\,W\,i\big(x,\delta(x)\big)\,.$$  We may presume
that we have $l\in\nhat{m}$ such that for all $x\in X$, 
$\delta(x)=l$.

Now $\{i(x,l):x\in X\}$ is cofinal in $W$ since,
given $t\in\ber$, $|\{x\in\ber:S(x)\subseteq\{s\in\ber:s\,W\,t\}|<\gc$.
(Recall that $W$ has order type $\gc$.)  Therefore
$|\{i(x,l):x\in X\}|\geq \cf(\gc)\geq\omega_1$.
Thus we may assume that we have $Y\in [X]^{\omega_1}$
such that for all $x,y\in Y$, if $x\neq y$, then 
$i(x,l)\neq i(y,l)$.

We may assume that 
for each $j\in\nhat{m}$, either
\begin{itemize}
\item[(1)] for all $x,y\in Y$, $i(x,j)=i(y,j)$ or
\item[(2)] for all $x\neq y$ in $Y$, $i(x,j)\neq i(y,j)$.
\end{itemize}

\noindent
Let $M=\{j\in\nhat{m}:$ for all $x,y\in Y$, $i(x,j)=i(y,j)\}$
and note that $l\notin M$.  
We may presume that for each $j\in M$, either
\begin{itemize}
\item[(1)] for all $x\in Y$, $\alpha\big(x,j)\big)>0$ or
\item[(2)] for all $x\in Y$, $\alpha\big(x,j)\big)<0$.
\end{itemize} 

\noindent Consequently, if $j\in M$, $u$ is the fixed value of 
$i(x,j)$, and $F$ is a finite subset
of $Y$, then $u\in S(\sum F)$.

Let $B=\{i(x,l):x\in Y\}$.
We claim that either $<$ and $W$ agree on $B$ or
$>$ and $W$ agree on $B$. This contradiction will complete the
proof.

Suppose instead we have $x$, $y$, $w$, and $z$ in $Y$ such that
\begin{itemize}
\item[(1)] $i(x,l)<i(y,l)$ and $i(x,l)\,W\,i(y,l)$ and
\item[(2)] $i(w,l)<i(z,l)$ and $i(z,l)\,W\,i(w,l)$.
\end{itemize}

Then $x+y$ and $w+z$
are members of $FS_2(Y)$.

For all $t\in S(x+y)\setminus\{i(y,l)\}$, $t\, W\,i(y,l)$ 
and $\{t\in S(x+y):t<i(y,l)\}=
\{i(x,j):j\leq l\hbox{ and }j\notin M\}\cup\{i(y,j):j< l\}$.

For all $t\in S(w+z)\setminus\{i(w,l)\}$, $t\, W\,i(w,l)$ 
and $\{t\in S(w+z):t<i(w,l)\}=
\{i(z,j):j< l\hbox{ and }j\notin M\}\cup\{i(w,j):j< l\}$.

Thus $|\{t\in S(x+y):t<i(y,l)\}|=
|\{t\in S(w+z):t<i(w,l)\}|+1$ so
that $\psi(x+y)\neq
\psi(w+z)$, a contradiction. 

This concludes the proof in the case $k=2$. Now assume that $k>2$ and
suppose we have a set $X\in[\ber]^{\subgc}$ with
$FS_k(X)$ monochromatic
with respect to $\psi$.  The proof 
proceeds verbatim through the sentence which
concludes ``then $u\in S(\sum F)$."
At this stage
let $A=\{x\in Y:|\{y\in Y:i(x,l)<i(y,l)\}|<\omega_1\}$.
By Lemma \ref{lemomega1}, $|A|<\omega_1$ so, replacing $Y$ by $Y\setminus A$
we may presume that
$$\hbox{for all }x\in Y\,,\,|\{y\in Y:i(x,l)<i(y,l)\}|=\omega_1\,.$$
Fix $x_1,x_2,\ldots,x_{k-2}\in Y$ 
such that $i(x_1,l)<i(x_2,l)<\ldots<i(x_{k-2},l)$.
We may presume that for all $y\in Y\setminus\{x_1,x_2,\ldots,x_{k-2}\}$, 
$i(y,l)>i(x_{k-2},l)$.

Then let $B=\big\{i(y,l):y\in Y\setminus\{x_1,x_2,\ldots,x_{k-2}\}\big\}$.
One shows that either $<$ and $W$ agree on $B$ or
$>$ and $W$ agree on $B$.

One picks $x$, $y$, $w$, and $z$ in $Y\setminus\{x_1,x_2,\ldots,x_{k-2}\}$ 
as before.

Let $b=x_1+x_2+\ldots+x_{k-2}$.
Then $b+x+y$ and $b+w+z$
are members of $FS_k(Y)\subseteq FS_k(X)$.

Let $D=\bigcup_{s=1}^{k-2}\{i(x_s,j):
j\leq l\hbox{ and }j\notin M\}$.

For all $t\in S(b+x+y)\setminus\{i(y,l)\}$, $t\, W\,i(y,l)$ 
and 
$$\begin{array}{rl}
\{t\in S(b+x+y):t<i(y,l)\}=
&\hskip -5 pt D\cup\{i(x,j):j\leq l\hbox{ and }j\notin M\}\cup{}\\
&\hskip -5 pt \{i(y,j):j< l\}\,.\end{array}$$

For all $t\in S(b+w+z)\setminus\{i(w,l)\}$, $t\, W\,i(w,l)$ 
and 
$$\begin{array}{rl}\{t\in S(b+w+z):t<i(w,l)\}=
&\hskip -5 pt D\cup\{i(z,j):j< l\hbox{ and }j\notin M\}\cup{}\\
&\hskip -5 pt \{i(w,j):j< l\}\,.\end{array}$$
One then reaches the same contradiction as before.
\end{proof}

Note that, if $\gc>\omega_1$, then there is a set $X\in[\ber]^{\omega_1}$
such that for each $k\in\ben\setminus\{1\}$, $FS_k(X)$ is
monochromatic with respect to the colouring $\psi$ of Theorem \ref{main}.
(To see this, pick $j\in\ber$ such that $|\{i\in\ber:i\,W\,j\}|=\omega_1$
and let $X=\{e_i+e_j:i\in\ber$ and $i\,W\,j\}$.)

\begin{question}\label{noch} Can one show in ZFC, without extra set theoretic
assumptions, that there is a finite colouring of $\ber$ such that there
is no uncountable set $X\subseteq\ber$ with $FS_2(X)$
monochromatic?\end{question}

\section{Baire and measurable colourings of $\ber$}

The subsets of $\ber$ with the property of Baire are the members
of the smallest sigma algebra containing the open sets and the
meagre sets. The set $A\subseteq \ber$ has the property of
Baire if and only if there exist an open set $U$ and a meagre
set $M$ such that $A=U\symdif M$.  By ``measurable'' we mean
``Lebesgue measurable". 


We remind the reader that if $k\in\ben$
and $A\subseteq \ber$, then $kA=A+A+\ldots+A$\break ($k$ times).  On
the other hand, by ${1\over k}A$ we mean $\{{1\over k}x:x\in A\}$ and by
$-x+A$ we mean $\{-x+y:y\in A\}$.

We note that the property with which we are concerned in this section
is translation invariant.

\begin{lemma}\label{lemtran} Let $A\subseteq\ber$, let $x\in\ber$, let
$k\in\ben$, and let $\kappa$ be a cardinal.  Then there exists $X\in[\ber]^\kappa$
such that $kX\subseteq A$ if and only if there exists $X\in[\ber]^\kappa$
such that $kX\subseteq -x+A$.\end{lemma}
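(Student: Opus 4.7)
The plan is to prove the equivalence by a direct bijective translation. The key algebraic observation is that for any constant $c\in\ber$ and any $Y\subseteq\ber$, the Minkowski sum satisfies $k(Y+c)=kY+kc$, since a typical element of $k(Y+c)$ has the form $(y_1+c)+(y_2+c)+\cdots+(y_k+c)=(y_1+\cdots+y_k)+kc$ with $y_i\in Y$.

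Given this, suppose first that $X\in[\ber]^\kappa$ satisfies $kX\subseteq A$. Set $Y=X-\frac{x}{k}$, which makes sense since $k\in\ben$ is nonzero and $\ber$ is a field. Translation is a bijection on $\ber$, so $|Y|=|X|=\kappa$, whence $Y\in[\ber]^\kappa$. Applying the identity above with $c=-x/k$ gives
$$kY = kX - x \subseteq A-x = -x+A\,,$$
which is the desired witness for the right-hand side.

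For the converse, assume $X\in[\ber]^\kappa$ with $kX\subseteq -x+A$, and set $Y=X+\frac{x}{k}$. Again $|Y|=\kappa$, and by the same identity with $c=x/k$,
$$kY = kX + x \subseteq (-x+A)+x = A\,,$$
so $Y$ is a witness for the left-hand side.

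There is really no obstacle in this argument; it is essentially just the observation that the property ``there exists $X$ of size $\kappa$ with $kX$ contained in a prescribed set'' is invariant under translation of the prescribed set, because $\ber$ admits division by the positive integer $k$ so that any translation of the target set can be absorbed into a translation of $X$ itself.
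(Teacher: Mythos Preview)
Your proof is correct and takes essentially the same approach as the paper: the paper's proof is the one-line observation that if $kX\subseteq A$ and $Y=-\frac{x}{k}+X$, then $kY\subseteq -x+A$, leaving the converse implicit by symmetry. You have simply written out both directions explicitly.
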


\begin{proof} If $kX\subseteq A$ and $Y=-{x\over k}+X$, then
$kY\subseteq -x+A$.\end{proof}

The following theorem is a corollary to Theorem \ref{kHgc}, but its
proof is simple and self contained, so we present it separately.

\begin{theorem}\label{bairekXomegaone}  Let $Z$ be a nonmeagre subset of $\ber$ with the property of
Baire and let $k\in\ben\setminus\{1\}$.  
Then there is an uncountable 
subset $H$ of $\ber$ such that $kH\subseteq Z$. \end{theorem}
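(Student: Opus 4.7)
The plan is to use the property of Baire to localize $Z$ as comeagre on some open interval, and then to construct $H$ by a transfinite recursion of length $\omega_1$, using the Baire category theorem at each stage to satisfy countably many constraints simultaneously.

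First I would write $Z = U \symdif M$ with $U$ open and $M$ meagre. Since $Z$ is nonmeagre, $U$ is nonempty, so it contains an open interval on which $I\setminus Z\subseteq M$ is meagre, i.e., on which $Z$ is comeagre. By Lemma \ref{lemtran} I may translate $Z$ so that this interval has the form $(-r,r)$ for some $r>0$. I then fix $\delta$ with $0<\delta<r/k$, and I aim to build $H=\{h_\alpha:\alpha<\omega_1\}\subseteq(-\delta,\delta)$ with $kH\subseteq Z$.

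The main step is the transfinite recursion. At stage $\alpha$ I have already chosen a countable set $\{h_\beta:\beta<\alpha\}$, and I need $h_\alpha\in(-\delta,\delta)$, distinct from all previously chosen values, such that for every $j\in\nhat{k}$ and every multiset $\{\beta_1,\ldots,\beta_{k-j}\}$ of previously chosen indices, the sum $jh_\alpha+h_{\beta_1}+\ldots+h_{\beta_{k-j}}$ lies in $Z$. Writing $s=h_{\beta_1}+\ldots+h_{\beta_{k-j}}$, this is the condition $h_\alpha\in\tfrac{1}{j}(Z-s)$. Since $Z$ is comeagre in $(-r,r)$, the set $\tfrac{1}{j}(Z-s)$ is comeagre in the interval $\bigl(\tfrac{-r-s}{j},\tfrac{r-s}{j}\bigr)$. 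Because $|s|<(k-j)\delta$ and $k\delta<r$, one gets $j\delta+|s|<k\delta<r$, so $(-\delta,\delta)$ is contained in that interval, and hence each condition carves out a comeagre subset of $(-\delta,\delta)$. There are only countably many such conditions at stage $\alpha$, so by the Baire category theorem their intersection is comeagre in $(-\delta,\delta)$, in particular uncountable, and an admissible $h_\alpha$ can be chosen.

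Finally any element of $kH$ is a sum $h_{\alpha_1}+\ldots+h_{\alpha_k}$; letting $\alpha=\max_i\alpha_i$ with multiplicity $j$, this sum was forced to lie in $Z$ when $h_\alpha$ was chosen. Hence $kH\subseteq Z$ and $|H|=\omega_1$. The one genuinely delicate point is the uniform bound $\delta<r/k$: without it, the intervals on which the various sets $\tfrac{1}{j}(Z-s)$ are comeagre would depend on $s$ and could fail to contain a single interval common to all stages, and the Baire category argument at the successor stages would break down.
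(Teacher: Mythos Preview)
Your proposal is correct and follows essentially the same route as the paper. Both arguments localize $Z$ to be comeagre on an interval, choose a smaller interval of width governed by $k$, and run a transfinite recursion of length $\omega_1$ in which the Baire category theorem handles the countably many constraints at each stage; the only cosmetic differences are that the paper works on $(0,\delta)$ with $(0,k\delta)\subseteq U$ rather than a symmetric interval, and phrases the inductive step as avoiding a countable union of meagre translates rather than intersecting comeagre sets.
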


\begin{proof} Pick a nonempty open set $U$ and a meagre set $M$ such that 
$Z=U\symdif M$. By Lemma \ref{lemtran} we may presume that we have
some $\delta>0$ such that $(0,k\delta)\subseteq U$.
Pick $y_0\in(0,\delta)\setminus \frac{1}{k}M$.

Let $0<\sigma<\omega_1$ and assume we have chosen $\langle y_\tau\rangle_{\tau<\sigma}$
such that if $\mu<\tau<\sigma$, then $y_\mu\neq y_\tau$ and $y_\tau<\delta$. Assume also that for
all $\eta<\sigma$,
if $\tau(1)\leq \tau(2)\leq\ldots\leq\tau(k)\leq \eta$, then $\sum_{i=1}^k y_{\tau(i)}\in Z$.
Let $Y_\sigma=\{y_\tau:\tau<\sigma\}$ and note that $kY_\sigma\subseteq Z$.

We claim that $(0,\delta)\subseteq {1\over k}U\cap\bigcap_{r=1}^{k-1}\bigcap\{{1\over r}(-a+U):a\in(k-r)Y_\sigma\}$.
One has immediately that $(0,k\delta)\subseteq U$, so let $r\in\nhat{k-1}$ and $a\in (k-r)Y_\sigma$ be given.
Now $a<(k-r)\delta$ so $a+r\delta<k\delta$
and consequently $(0,r\delta)\subseteq (-a+U)$.
Let $B={1\over k}Z\cap\bigcap_{r=1}^{k-1}\bigcap\{{1\over r}(-a+Z):a\in(k-r)Y_\sigma\}$.
We claim that $(0,\delta)\setminus B$ is meagre.
To see this, it suffices to show that
$$\textstyle (0,\delta)\setminus B\subseteq {1\over k}M\cup\bigcup_{r=1}^{k-1}\bigcup\{{1\over r}(-a+M):a\in(k-r)Y_\sigma\}\,.$$
So let $y\in(0,\delta)$ and assume that $y\notin B$.
If $y\notin {1\over k}Z$, then since $y\in {1\over k}U$, we have
$y\in {1\over k}M$.  So assume we have $r\in\nhat{k-1}$ and $a\in (k-r)Y_\sigma$ such that
$y\notin {1\over r}(-a+Z)$. Then $y\in {1\over r}(-a+U)$ so $y\in {1\over r}(-a+M)$.

Pick $y_{\sigma}\in \big((0,\delta)\cap B\big)\setminus\{y_\tau:\tau<\sigma\}$.  
To verify the induction hypothesis, let $\tau(1)\leq \tau(2)\leq\ldots\leq \tau(k)\leq \sigma$.
We shall show that $\sum_{i=1}^k y_{\tau(i)}\in Z$.  If $\tau(k)<\sigma$, the
conclusion holds by the induction hypothesis, so assume that $\tau(k)=\sigma$.
Pick $s\in\nhat{k}$ such that $\tau(s)=\sigma$ and, if $s>1$, then $\tau(s-1)<\sigma$.
If $s=1$, then $\sum_{i=1}^k y_{\tau(i)}=ky_{\sigma}\in Z$. So assume
$s>1$ and let $r=k-s+1$. Then
$a=\sum_{i=1}^{s-1}y_{\tau(i)}\in (k-r)Y_\sigma$ so
$\sum_{i=1}^k y_{\tau(i)}=a+ry_{\sigma}\in Z$.

The construction being complete, let $H=\{y_\sigma:\sigma<\omega_1\}$.
\end{proof}

\begin{corollary}\label{bairecolor} Let $k\in\ben\setminus\{1\}$ and let
$\ber$ be countably coloured so that each colour class has the property
of Baire. Then there exists an uncountable set $H$ such that $kH$ is monochromatic.
\end{corollary}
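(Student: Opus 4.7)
The plan is to reduce this immediately to Theorem \ref{bairekXomegaone}. Suppose $\ber = \bigcup_{n<\omega} C_n$ is a countable partition into colour classes, each having the property of Baire. Since $\ber$ is a complete metric space, by the Baire category theorem it is not meagre, and a countable union of meagre sets is meagre; therefore at least one colour class $C_n$ must be nonmeagre. Since $C_n$ has the property of Baire and is nonmeagre, Theorem \ref{bairekXomegaone} applies to $Z = C_n$, yielding an uncountable set $H \subseteq \ber$ with $kH \subseteq C_n$, so $kH$ is monochromatic.

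There is essentially no obstacle here; the entire content is packaged in Theorem \ref{bairekXomegaone}. The only thing to check is the trivial point that in a countable cover of a nonmeagre space by sets with the Baire property, at least one piece is nonmeagre, which follows from the fact that the meagre sets form a $\sigma$-ideal and $\ber$ itself is not meagre.
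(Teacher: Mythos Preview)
Your proof is correct and follows essentially the same approach as the paper: observe that since $\ber$ is nonmeagre and the meagre sets form a $\sigma$-ideal, some colour class is nonmeagre, and then apply Theorem~\ref{bairekXomegaone}. The paper's proof is the single sentence ``One of the colour classes must be nonmeagre.''
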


\begin{proof} One of the colour classes must be nonmeagre. \end{proof}

The rest of this paper is devoted to the proof of the following theorem
and corollary.  We denote the Lebesgue measure of a subset $Z$ of $\ber$ by 
$\lambda(Z)$.

\begin{theorem}\label{kHgc} Let $k\in\ben\setminus\{1\}$ and let $Z\subseteq (0,1)$ such that either
$Z$ is nonmeagre with the property of Baire or $Z$ is measurable
with $\lambda(Z)>0$.  Then there exists $H\subseteq \ber$ such that $|H|=\gc$ and $kH\subseteq Z$.
\end{theorem}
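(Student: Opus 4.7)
The plan is to produce a Cantor scheme: I inductively construct a system of nested closed intervals $\{I_s : s \in 2^{<\omega}\}$ inside $(0, \delta)$ for some $\delta > 0$, so that the two immediate successors of each $I_s$ are disjoint closed subintervals and diameters tend to $0$. Each branch $b \in 2^{\omega}$ then pins down a unique point $y_b \in \bigcap_n I_{b\mid n}$, distinct branches give distinct points, so $H = \{y_b : b \in 2^{\omega}\}$ has cardinality $\gc$. The scheme will be engineered so that at stage $n$, for every $k$-tuple $(s_1, \ldots, s_k) \in (2^n)^k$, the Minkowski sum $I_{s_1} + \cdots + I_{s_k}$ lies in a stage-$n$ approximation of $Z$ that shrinks down into $Z$ in the limit, forcing $kH \subseteq Z$.

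For the Baire case, write $Z = U \symdif M$ with $U$ open and $M = \bigcup_n F_n$, each $F_n$ closed nowhere dense; by Lemma \ref{lemtran} I may translate so that $(0, k\delta) \subseteq U$. The stage-$n$ approximation is $U \setminus \bigcup_{j<n} F_j$: I require every level-$n$ Minkowski $k$-sum of the $I_s$'s to lie in it. Inductively, since $F_{n-1}$ is nowhere dense, I iterate through the finitely many $k$-tuples in $(2^n)^k$ and refine the two children of each level-$(n-1)$ interval so that the new sum $I_{s_1} + \cdots + I_{s_k}$ lies in an open subinterval of its predecessor sum-interval which avoids $F_{n-1}$ (only shrinking, so tuples already handled remain good). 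Taking limits, each branch $k$-sum $y_{b_1} + \cdots + y_{b_k}$ lies in $\bigcap_n (U \setminus \bigcup_{j<n} F_j) = U \setminus M \subseteq Z$.

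For the measure case, the same strategy is contemplated, but the main obstacle is that $Z^c$ may have positive measure in every neighbourhood, so no pointwise avoidance step is available. The plan is first to pass to a compact $K \subseteq Z$ with $\lambda(K) > 0$ via inner regularity, pick a Lebesgue density point $z_0$ of $K$, and run the construction on a very small scale near $y_0 = z_0 / k$. The stage-$n$ demand will be quantitative: each Minkowski sum $I_{s_1} + \cdots + I_{s_k}$ is to lie in a short interval where $K$ has relative density at least $1 - \eta_n$, for a rapidly vanishing sequence $\eta_n$. The hard part, and why this case is substantially more intricate, is arguing that the limit $k$-sums of branches actually land in $K$ and not merely in the high-density neighbourhoods; I expect to handle this via a carefully matched choice of $\eta_n$ against the branching depth together with a Fubini-type argument at each refinement step, leveraging the closedness of $K$ to force membership of the limit sums in $K$ itself.
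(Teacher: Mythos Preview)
Your Baire-case argument is correct and takes a different route from the paper. The paper passes to the symbolic space $A=\{0,\ldots,m-1\}^{\ben}$ with $m=k+2$ via base-$m$ expansions, and in both the Baire and measure cases reduces the problem to finding a single $\alpha\in P=\psi^{-1}[Z\cap W]$ together with an infinite set $X$ of coordinates on which $\alpha$ vanishes, such that every $\beta$ agreeing with $\alpha$ off $X$ lies in $P$; one then takes $H=\psi(\alpha)/k+\psi[E]$, where $E$ is the Cantor set of sequences supported on $X$ with digits in $\{0,1\}$, and the choice $m=k+2$ prevents carries so that $kH\subseteq\psi[P]\subseteq Z$. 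For the Baire case the paper locates $\alpha$ and $X$ by invoking a theorem of Moran and Strauss on Baire-measurable partitions of product spaces. Your direct Cantor scheme in $\ber$ is self-contained and avoids that black box, which is a genuine gain; the iterate-and-shrink step over the finitely many $k$-tuples at each level is sound, including when tuples contain repetitions.

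The measure case, however, has a real gap. The condition that each finite-stage sum interval $I_{s_1}+\cdots+I_{s_k}$ meets $K$ in relative density at least $1-\eta_n$ does not force the limit sum $y_{b_1}+\cdots+y_{b_k}$ into $K$: if $K$ is a fat Cantor set then $K^c$ is open and dense, and a nested sequence of intervals each meeting $K$ in density tending to $1$ can perfectly well shrink to a point of $K^c$. Closedness of $K$ would help only if you could exhibit, for every branch $k$-tuple, a sequence of points of $K$ converging to the limit sum, which in turn needs every child sum interval at every stage to meet $K$; but the $2\cdot 2^n$ child intervals must be chosen jointly to satisfy $(2^{n+1})^k$ coupled constraints, and high parent density does not propagate to all such children simultaneously. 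Your ``Fubini-type argument'' is left unspecified and I do not see how to close it. The paper sidesteps this entirely by working in the symbolic space: from a closed $S_0\subseteq P$ of nearly full measure it shows (via compactness of $S_0$) that for all large $n$ the intersection $\bigcap_{\eta\in Q_{\{n\}}}(S_0+\eta)$ over all shifts in the single coordinate $n$ loses arbitrarily little measure, iterates along a sparse sequence $n_1<n_2<\cdots$ to obtain $T=\bigcap_r S_r$ of positive measure, and observes that any $\alpha\in T$ may be altered freely on $Y=\{n_r:r\in\ben\}$ while remaining in $S_0\subseteq P$. The point is that the measure loss is controlled coordinate-by-coordinate \emph{before} any point is chosen, and then a single $\alpha\in T$ with infinitely many zero digits along $Y$ is selected; this replaces your coupled refinement problem with a sequential one and is the idea your sketch is missing.
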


\begin{corollary}\label{corkHgc} Let $k\in\ben\setminus\{1\}$ and let
$\ber$ be countably coloured so that each colour class has the property
of Baire or each colour class is measurable. Then there exists $H\subseteq\ber$  such that 
$|H|=\gc$ and $kH$ is monochromatic.\end{corollary}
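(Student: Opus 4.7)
The plan is to deduce Corollary \ref{corkHgc} as an essentially immediate consequence of Theorem \ref{kHgc} by a pigeonhole argument over unit intervals, combined with the translation invariance given by Lemma \ref{lemtran}.

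First, I would argue that some colour class, when intersected with a suitable open unit interval $(n,n+1)$ with $n\in\bez$, realises the hypothesis of Theorem \ref{kHgc} after translation into $(0,1)$. The point is that $\ber\setminus\bez=\bigcup_{n\in\bez}(n,n+1)$ is a countable union of bounded open intervals, while $\bez$ is both meagre and null. In the Baire case, $\ber$ is a Baire space and is covered by the countably many colour classes (each with the Baire property), so at least one colour class $Z_c$ is nonmeagre; removing the countable set $\bez$ preserves nonmeagreness, and a further pigeonhole on the intervals $(n,n+1)$ yields some $n\in\bez$ for which $Z_c\cap(n,n+1)$ is nonmeagre. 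The analogous argument in the measurable case uses that $\lambda(\ber)=\infty$: some colour class $Z_c$ is measurable with positive measure, and then $\lambda\bigl(Z_c\cap(n,n+1)\bigr)>0$ for some $n\in\bez$. In either case $Z_c\cap(n,n+1)$ inherits the Baire property or measurability, since both are preserved under intersection with an open set.

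Next, I would translate: set $Z'=\bigl(Z_c\cap(n,n+1)\bigr)-n\subseteq(0,1)$. By construction $Z'$ satisfies the hypothesis of Theorem \ref{kHgc}, which accordingly produces $H'\subseteq\ber$ with $|H'|=\gc$ and $kH'\subseteq Z'$. Translating back, I set $H=H'+n/k$; then $kH=kH'+n\subseteq Z'+n\subseteq Z_c$, so $H$ has cardinality $\gc$ and $kH$ lies entirely in a single colour class. (This last step is exactly the content of Lemma \ref{lemtran}.)

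There is no substantive obstacle here beyond the bookkeeping above: all of the genuine content — constructing an uncountable $H$ of cardinality $\gc$ whose $k$-fold sumset is contained in $Z$ — has been deferred to Theorem \ref{kHgc}. The corollary is a straightforward pigeonhole-and-translate wrapper around that theorem.
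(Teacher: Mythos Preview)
Your proposal is correct and is exactly the ``immediate'' deduction the paper has in mind: the paper merely states that Corollary~\ref{corkHgc} follows immediately from Theorem~\ref{kHgc}, and your pigeonhole-on-unit-intervals plus the translation of Lemma~\ref{lemtran} is precisely the routine bookkeeping needed to reduce to a set $Z\subseteq(0,1)$ satisfying the hypothesis of Theorem~\ref{kHgc}.
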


To prove Theorem \ref{kHgc}, we let $k\in\ben\setminus\{1\}$ be given,
let $m=k+2$, and fix some notation which will be used throughout the proof.

\begin{definition} \begin{itemize}
\item[(a)] Let $A=\bigtimes_{n=1}^\infty\ohat{m-1}$, let
$\ohat{m-1}$ be discrete, give $A$ the product topology,
and let $\mu$ denote the product probability measure on
$A$ determined by assigning measure $\frac{1}{m}$ to each
point of $\ohat{m-1}$.
\item[(b)] Let $C$ be the set of points in $(0,1)$ that have
a terminating base $m$ expansion and let $W=(0,1)\setminus C$.
\item[(c)] Let $B=\{\alpha\in A:(\forall n\in\ben)(\exists r>n)(\exists s>n)
(\alpha(r)\neq 0\hbox{ and }\alpha(s)\neq m-1)\}$.
\item[(d)] Define $\psi:B\to W$ by $\psi(\alpha)=\sum_{n=1}^\infty\frac{\alpha(n)}{m^n}$.
\item[(e)] For $\alpha\in A$, $\supp(\alpha)=\{n\in\ben:\alpha(n)>0\}$.
\item[(f)] For $F\subseteq\ben$, $Q_F=\{\alpha\in A:\supp(\alpha)\subseteq F\}$.
\end{itemize}\end{definition}

The assertions in the following lemma can all be established
using standard techniques.

\begin{lemma}\label{lembairemeas} Let $D\subseteq(0,1)$ and let $G\subseteq B$. 
\begin{itemize}
\item[(1)] The function $\psi$ is a homeomorphism from $B$ onto $W$.
\item[(2)] If $D$ has the property of Baire in $(0,1)$, then
$D\cap W$ has the property of Baire in $W$.
\item[(3)] If $D$ is measurable in $(0,1)$, then $D\cap W$ is 
measurable in $W$.
\item[(4)] If $G$ has the property of Baire in $B$, then
$G$ has the property of Baire in $A$.
\item[(5)] If $G$ is measurable (with respect to $\mu$) in $B$, then $G$ is 
measurable in $A$.
\item[(6)] If $D$ is a Lebesgue measurable subset of $W$,
then $\psi^{-1}[D]$ is measurable in $B$ with respect to $\mu$
and $\lambda(D)=\mu(\psi^{-1}[D])$.
\end{itemize}\end{lemma}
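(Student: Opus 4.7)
My plan is to prove (1) first and then reduce all other parts to two easy facts: $C$ is countable, so $W$ is a Borel, comeager, full-measure subset of $(0,1)$; and $A\setminus B$ is a countable union of closed nowhere dense null sets, so $B$ is a dense $G_\delta$ in $A$ of full $\mu$-measure. The only step with real content is (1), and there specifically the use of the $B$-condition to verify continuity of $\psi^{-1}$.

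For (1), every $w\in(0,1)$ admits at most two base-$m$ expansions $\sum\alpha(n)/m^n$, and these coincide unless $w\in C$, in which case one terminates and the other ends with $m-1$ repeating. So each $w\in W$ has a unique base-$m$ expansion $\alpha$, which automatically lies in $B$, and conversely each $\alpha\in B$ yields some $w=\psi(\alpha)\in W$; thus $\psi$ is a bijection. It is continuous because $|\psi(\alpha)-\psi(\beta)|\le 2/m^N$ whenever $\alpha,\beta$ agree on the first $N$ coordinates. For continuity of $\psi^{-1}$, fix $\alpha\in B$ and $n\in\ben$ and set $a=\sum_{j\le n}\alpha(j)/m^j$; the defining property of $B$ forces $\sum_{j>n}\alpha(j)/m^j$ to lie strictly between $0$ and $1/m^n$, so some neighbourhood of $\psi(\alpha)$ in $\ber$ lies inside $(a,a+1/m^n)$, and every $w'\in W$ in this interval has first $n$ base-$m$ digits $\alpha(1),\ldots,\alpha(n)$, i.e.\ $\psi^{-1}(w')$ lies in the basic cylinder around $\alpha$ in $A$.

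Parts (2) and (3) then use only that $C$ is countable. For (2), if $D=U\symdif M$ with $U$ open in $(0,1)$ and $M$ meager, then $D\cap W=(U\cap W)\symdif(M\cap W)$; the first term is open in $W$, and $M\cap W$ is meager in $W$ because $W$ is dense in $(0,1)$, so any nowhere dense subset of $(0,1)$ remains nowhere dense when intersected with $W$. Part (3) is immediate because $W$ is Borel in $(0,1)$, so intersecting a Lebesgue measurable set with $W$ produces a set measurable in $W$.

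For (4)--(6), write $A\setminus B=\bigcup_n(E_n\cup F_n)$ with $E_n=\{\alpha:\alpha(j)=0\text{ for all }j>n\}$ and $F_n=\{\alpha:\alpha(j)=m-1\text{ for all }j>n\}$; each is closed, has empty interior, and has $\mu$-measure zero, so $A\setminus B$ is meager and $\mu$-null. For (4), if $G\subseteq B$ has the property of Baire in $B$, write $G=(U\cap B)\symdif M'$ with $U$ open in $A$ and $M'$ meager in $B$; then, using $U\cap B=U\symdif(U\setminus B)$, one has $G=U\symdif\bigl((U\setminus B)\symdif M'\bigr)$, where both $U\setminus B$ and $M'$ are meager in $A$ (for the latter: if $N\subseteq B$ were nowhere dense in $B$ but $\overline{N}^A$ contained a nonempty open $V\subseteq A$, then $V\cap B$ would be nonempty open in $B$ and contained in $\overline{N}^A\cap B=\overline{N}^B$, a contradiction). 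Part (5) follows by the same rearrangement with ``$\mu$-null'' replacing ``meager'', since a $\mu$-null subset of $B$ is a $\mu$-null subset of $A$ a fortiori. Finally, for (6), both $\psi_*\mu$ and $\lambda|_W$ are Borel probability measures on $W$ assigning $1/m^n$ to every set $\psi(\{\alpha\in B:\alpha(j)=a_j,\ j\le n\})$, and these sets form a $\pi$-system generating the Borel $\sigma$-algebra of $W$, so Carath\'eodory uniqueness forces $\psi_*\mu=\lambda|_W$.
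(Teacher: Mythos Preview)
Your proof is correct and follows the standard route; the paper itself omits the argument entirely, simply remarking that ``the assertions in the following lemma can all be established using standard techniques.'' You have supplied precisely those details, and each step checks out --- in particular your use of the $B$-condition to pin $\psi(\alpha)$ strictly inside $(a,a+1/m^n)$ is the right observation for the continuity of $\psi^{-1}$, and your $\pi$-system argument for (6) is the cleanest way to identify $\psi_*\mu$ with $\lambda|_W$.
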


The following lemma will be used to proof both parts of 
Theorem \ref{kHgc}.

\begin{lemma}\label{Ebig} Let $P\subseteq B$,
$\alpha\in P$, and $X$ an infinite subset of
$\ben$ such that 
\begin{itemize}
\item[(a)] for all $n\in X$, $\alpha(n)=0$ and
\item[(b)] for all $\beta\in A$, if $(\forall n\in\ben\setminus X)\big(\beta(n)=\alpha(n)\big)$,
then $\beta \in P$.
\end{itemize}
Then there exists $H\subseteq W$ such that $|H|=\gc$ and $kH\subseteq\psi[P]$.
\end{lemma}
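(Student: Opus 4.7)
My plan is to construct $H$ explicitly by anchoring each element at $z = \psi(\alpha)/k$, so that $\psi(\alpha)$ is reproduced automatically when we form a $k$-fold sum. For each subset $T \subseteq X$, set $w_T = \sum_{n \in T} m^{-n}$ and $y_T = z + w_T$. Since every such $w_T$ has only digits $0$ and $1$ in base $m = k+2 \geq 4$, the map $T \mapsto w_T$ is injective (there is no terminating/non\-terminating ambiguity for $0/1$ sequences when $m-1\geq 2$), giving $2^{\aleph_0} = \gc$ distinct candidates $y_T$.

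The first step is a small pruning: discard the $T$ for which $y_T \in C$; equivalently $w_T \in C - z$, a countable set. What remains has cardinality $\gc$, and lies in $W$ since each $y_T$ is in $(0,1)$ (using $z < 1/k$ and $w_T \leq 1/(m-1)$) and is non-terminating. Call this family $H$.

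The main verification is that $kH \subseteq \psi[P]$. Given $y_{T_1}, \ldots, y_{T_k} \in H$ (with repetitions allowed), write
$$\textstyle \sum_{i=1}^k y_{T_i} \;=\; kz + \sum_{i=1}^k w_{T_i} \;=\; \psi(\alpha) + \sum_{n \in X} d(n)\,m^{-n},$$
where $d(n) = |\{i \in \nhat{k} : n \in T_i\}|$. Define $\beta \in A$ by $\beta(n) = \alpha(n)$ for $n \in \ben \setminus X$ and $\beta(n) = d(n)$ for $n \in X$. Because $\alpha$ and $d$ have disjoint supports, and $d(n) \leq k = m-2 < m$, no carries occur, and $\sum_i y_{T_i} = \sum_n \beta(n)m^{-n}$.

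It remains to argue $\beta \in B$, so that this sum equals $\psi(\beta) \in \psi[P]$. Since $\alpha \in B$ and $\alpha$ vanishes on $X$, there are infinitely many $n \in \ben \setminus X$ with $\alpha(n) \neq 0$, so $\beta$ has infinitely many nonzero digits; and since $X$ is infinite with $\beta(n) \leq k < m-1$ on $X$, there are infinitely many indices where $\beta(n) \neq m - 1$. Hence $\beta \in B$, and since $\beta|_{\ben \setminus X} = \alpha|_{\ben \setminus X}$, hypothesis (b) yields $\beta \in P$, so $\psi(\beta) \in \psi[P]$. The conceptually hardest step is finding the anchoring device: a naive attempt builds $y_T$ supported on $X$, but then no $k$-fold sum can reproduce the required digits of $\alpha$ outside $X$; shifting by $z = \psi(\alpha)/k$ resolves this, while keeping the $X$-digits at $0/1$ (a luxury afforded by $k < m-1$) guarantees carry-free arithmetic.
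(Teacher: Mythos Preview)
Your proof is correct and follows essentially the same approach as the paper's: both anchor at $z=\psi(\alpha)/k$ and take $H=z+\{w_T:T\subseteq X\}$ with the $w_T$ having only digits $0$ or $1$ on $X$, so that no carries arise and hypothesis (b) applies to the resulting $\beta$. The only cosmetic difference is how $H\subseteq W$ is secured: the paper restricts to infinite $T$ (so that each $\delta\in B$ and $\psi(\delta)\in W$), whereas you allow all $T$ and prune the countably many with $y_T\in C$; both devices work.
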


\begin{proof} Let $E$ be the set of all $\delta\in A$ such that
\begin{itemize}
\item[(1)] for all $n\in X$, $\delta(n)\in\{0,1\}$,
\item[(2)] $\{n\in X:\delta(n)=1\}$ is infinite, and
\item[(3)] for all $n\in \ben\setminus X$, $\delta(n)=0$.
\end{itemize}

Then $|E|=\gc$ and $E\subseteq B$.  Let $H=\frac{\psi(\alpha)}{k}+\psi[E]$.
The fact that $H\subseteq W$ will follow from the fact that $kH\subseteq\psi[P]$.
To see that $kH\subseteq\psi[P]$, let $b_1,b_2,\ldots,b_k\in H$ and pick
$\delta_1,\delta_2,\ldots,\delta_k\in E$ such that for each $t\in\nhat{k}$,
$b_t=\frac{\psi(\alpha)}{k}+\psi(\delta_t)$.  Then
$\sum_{t=1}^k b_t=\psi(\alpha)+\sum_{t=1}^k\psi(\delta_t)$.  We
claim that $\psi(\alpha)+\sum_{t=1}^k\psi(\delta_t)=\psi(\alpha+\sum_{t=1}^k\delta_t)$.

Given $n\in\ben$ and $t\in\nhat{k}$, if $n\in X$, then
$\delta_t(n)\in\{0,1\}$ while if $n\in\ben\setminus X$,
then $\delta_t(n)=0$. Therefore 
$(\alpha+\sum_{t=1}^k\delta_t)(n)=\alpha(n)+\sum_{t=1}^k\delta_t(n)$
so that $\psi(\alpha+\sum_{t=1}^k\delta_t)=\psi(\alpha)+\sum_{t=1}^k\psi(\delta_t)$
as claimed.

It thus suffices to show that $\alpha+\sum_{t=1}^k\delta_t\in P$.
Now given $n\in\ben\setminus X$, $(\alpha+\sum_{t=1}^k\delta_t)(n)=\alpha(n)$,
so by (b), $\alpha+\sum_{t=1}^k\delta_t\in P$.\end{proof}

We remark that the argument in the next to last paragraph of
the proof of Lemma \ref{Ebig} is the reason for taking $m=k+2$.
If we had $m=k+1$, $\alpha(n)=m-1$ for $n\in \ben\setminus X$,
and $\delta_t(n)=1$ for $n\in X$, then one would not
have $\alpha+\sum_{t=1}^k\delta_t\in B$.

To establish Theorem \ref{kHgc} in the Baire case, we will use the
following result of Moran and Strauss.

\begin{theorem}\label{thmMS} Let $f:A\to\omega$ such that for
each $n\in\omega$, $f^{-1}[\{n\}]$ has the property of Baire.
Then there exists $U\subseteq A$ such that $A\setminus U$ is
meagre and for every $\alpha\in U$, there is a finite
set $F\subseteq \ben$ such that whenever $G$ is a finite subset
of $\ben\setminus F$ and $Y$ is an infinite subset of $\ben$,
there is an infinite $X\subseteq Y$ such that
$$(\forall\beta\in A)\big((\forall n\in\ben\setminus(G\cup X))
(\beta(n)=\alpha(n))\Rightarrow f(\beta)=f(\alpha)\big)\,.$$
\end{theorem}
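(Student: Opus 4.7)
The plan is to combine the Baire-property decomposition of each color class with a Banach--Mazur game argument on the product space $A$, together with a diagonal construction exploiting the closed nowhere dense structure of meagre sets.

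First, for each $n\in\omega$ write $f^{-1}[\{n\}]=U_n\symdif M_n$ with $U_n$ open in $A$ and $M_n$ meagre, and decompose $M_n=\bigcup_k N_{n,k}$ into closed nowhere dense pieces. Let $M^\ast=\bigcup_{n,k}N_{n,k}$, still meagre. For $\alpha\in A\setminus M^\ast$, the value $f(\alpha)$ is the unique $n$ with $\alpha\in U_n$, witnessed by a finite $F_0(\alpha)\subseteq\ben$ with $[\alpha|_{F_0(\alpha)}]\subseteq U_{f(\alpha)}$; any $\beta$ agreeing with $\alpha$ on $F_0(\alpha)$ and avoiding $M_{f(\alpha)}$ satisfies $f(\beta)=f(\alpha)$. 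The task then reduces to enlarging $F_0(\alpha)$ to a finite $F(\alpha)$ so that, for any finite $G\subseteq\ben\setminus F(\alpha)$ and infinite $Y\subseteq\ben$, an infinite $X\subseteq Y$ can be built making the cylinder $C_{G,X}(\alpha):=\{\beta\in A:\beta|_{\ben\setminus(G\cup X)}=\alpha|_{\ben\setminus(G\cup X)}\}$ disjoint from every $N_{f(\alpha),k}$.

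The key combinatorial tool is upward closure: for each closed nowhere dense $N$ with $\alpha\notin N$, the family $\mathcal{W}_N(\alpha)=\{F'\subseteq\ben\text{ finite}:[\alpha|_{F'}]\cap N=\emptyset\}$ is nonempty and closed under supersets. Once $F(\alpha)$ is fixed appropriately, the construction of $X$ for given $(G,Y)$ proceeds by diagonalization over $k$: enumerate the sets $N_{f(\alpha),k}$ and, at stage $k$, add a new element of $Y$ to $X$ while maintaining that some witness $F'_k\in\mathcal{W}_{N_{f(\alpha),k}}(\alpha)$ remains in $\ben\setminus(G\cup X)$, so that $C_{G,X}(\alpha)\subseteq[\alpha|_{F'_k}]$ stays disjoint from $N_{f(\alpha),k}$. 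To produce the co-meagre $U$, play a Banach--Mazur game on $A$ in which Player II builds $\alpha$ as the intersection of nested basic open sets, committing $\alpha$'s values on a finite growing set in response to Player I's threats drawn from the countable family $\{N_{n,k}\}$; the strategy ensures that for each $N_{n,k}$ a witness is available that is robust against any subsequent finite $G$ and any infinite $X$ meeting the theorem's constraints. Standard theory then yields that the resulting $\alpha$'s form a co-meagre set.

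The hard part of this plan is the uniformity step in the Banach--Mazur game: showing that a single finite $F(\alpha)$, committed once and for all, can be made to work for every $k$ and every admissible $(G,Y)$ simultaneously. Without this uniformity, $F(\alpha)$ would need to grow with $k$, defeating the single-$F$ conclusion. The game strategy must therefore anticipate all countably many $N_{n,k}$'s collectively, using the upward closure of each $\mathcal{W}_{N_{n,k}}(\alpha)$ to consolidate the accumulating commitments into one finite $F(\alpha)$ while still leaving room for the later choice of $X$. This ``collective witness'' construction, keeping the growing commitments bounded by one finite set, is the genuine content of the proof.
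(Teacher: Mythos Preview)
The paper does not prove this theorem at all; its entire proof reads ``This is a special case of \cite[Theorem 2]{MS}.'' So there is no in-paper argument to compare your proposal against, only the cited Moran--Strauss result.

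What you have written is a plan, not a proof, and you say so yourself: you flag the ``collective witness'' step---arranging that a single finite $F(\alpha)$ works uniformly for all $k$, all admissible $G$, and all $Y$---as ``the genuine content of the proof'', and then stop. That is indeed where the difficulty lies, and nothing in your outline resolves it. Concretely, your diagonal construction of $X$ asks, at stage $k$, for a witness $F'_k\in\mathcal{W}_{N_{f(\alpha),k}}(\alpha)$ contained in $\ben\setminus(G\cup X_k)$. But $\mathcal{W}_N(\alpha)$ is upward closed, not downward closed: knowing $\alpha\notin N$ gives you \emph{some} finite witness, not one disjoint from an adversarially chosen finite set $G$. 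For a general $\alpha\in A\setminus M^\ast$ there is no reason such a disjoint witness exists, so the refinement from $A\setminus M^\ast$ down to $U$ has to do real work, and your Banach--Mazur sketch does not supply Player~II's strategy or explain why it succeeds. Saying the strategy ``ensures that for each $N_{n,k}$ a witness is available that is robust against any subsequent finite $G$'' is a restatement of the goal, not an argument; and since there are uncountably many pairs $(G,Y)$ but only countably many moves, the passage from the game to the full quantifier structure is not automatic.

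In short: your decomposition and your identification of the crux are correct, and a Banach--Mazur approach along these lines can presumably be completed (this is essentially what Moran and Strauss do), but as written the proposal is a statement of intent with the substantive step omitted.
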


\begin{proof} This is a special case of \cite[Theorem 2]{MS}.\end{proof}

\begin{lemma}\label{lembaire} Let $Z$ be a nonmeagre subset
of $(0,1)$ with the property of Baire and let 
$P=\psi^{-1}[Z\cap W]$. Then there exist $\alpha\in P$ and an infinite
subset $X$ of $\ben$ such that 
\begin{itemize}
\item[(a)] for all $n\in X$, $\alpha(n)=0$ and
\item[(b)] for all $\beta\in A$, if $(\forall n\in\ben\setminus X)\big(\beta(n)=\alpha(n)\big)$,
then $\beta \in P$.
\end{itemize}
\end{lemma}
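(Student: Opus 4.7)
The overall strategy is to feed the indicator function $f = \chi_P : A \to \{0,1\}$ into Theorem \ref{thmMS} and then, given a good $\alpha_0 \in P$, to build $\alpha$ by simply zeroing out the coordinates in the ``stability'' set $X$ that the theorem produces. For this plan to succeed I need two things: $f^{-1}[\{n\}]$ must have the property of Baire in $A$ (to apply Theorem \ref{thmMS}), and $P$ must be nonmeagre in $A$ (so that a chosen $\alpha_0$ can be taken inside $P$).

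The Baire-property hypothesis is straightforward to verify: Lemma \ref{lembairemeas}(2) gives that $Z \cap W$ has the property of Baire in $W$, the homeomorphism $\psi$ transfers this to $P$ inside $B$, and Lemma \ref{lembairemeas}(4) lifts it to $A$. The nonmeagreness is more delicate and is where I expect most of the bookkeeping to live. Since $C$ is countable, $Z \cap W$ differs from $Z$ by a meagre set and so is nonmeagre in $(0,1)$; since $W$ is a dense $G_\delta$ subset of $(0,1)$, nonmeagreness in $(0,1)$ coincides with nonmeagreness in the subspace $W$. Transporting via $\psi$, $P$ is nonmeagre in $B$, and the same dense-$G_\delta$ argument (noting $A \setminus B$ consists only of sequences eventually $0$ or eventually $m-1$, hence is countable) promotes this to nonmeagreness in $A$.

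With the hypotheses in hand, apply Theorem \ref{thmMS} to $f$ to obtain a comeagre $U \subseteq A$. Since $P$ is nonmeagre and $A \setminus U$ is meagre, pick any $\alpha_0 \in P \cap U$. Let $F$ be the finite set guaranteed by the theorem for $\alpha_0$, and apply the theorem with $G = \emptyset$ and $Y = \ben$ to obtain an infinite $X \subseteq \ben$ such that every $\beta \in A$ agreeing with $\alpha_0$ off $X$ satisfies $f(\beta) = f(\alpha_0) = 1$, i.e., $\beta \in P$. Define $\alpha$ by $\alpha(n) = 0$ for $n \in X$ and $\alpha(n) = \alpha_0(n)$ for $n \notin X$. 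Then (a) is immediate; $\alpha$ itself agrees with $\alpha_0$ off $X$, so $\alpha \in P$; and for any $\beta$ agreeing with $\alpha$ off $X$, $\beta$ also agrees with $\alpha_0$ off $X$, so (b) holds as well.

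The main obstacle is the topological bookkeeping in the nonmeagreness step, transferring ``nonmeagre with the Baire property'' cleanly across the four spaces $(0,1)$, $W$, $B$, $A$; once this is set up, Theorem \ref{thmMS} does essentially all of the work, and the freedom to choose $G = \emptyset$ means we need no further fine tuning to get coordinates on which $\alpha$ vanishes.
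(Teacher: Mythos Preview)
Your proof is correct and follows the same overall strategy as the paper: verify that $P$ has the property of Baire and is nonmeagre in $A$, then apply Theorem~\ref{thmMS} to the indicator function $\chi_P$. The one point of departure is how condition~(a) is secured. The paper first arranges that the chosen point already has infinitely many zeros, by additionally avoiding the meagre sets $D_l=\{\alpha\in A:(\forall n>l)(\alpha(n)>0)\}$, and then feeds $Y=\{n:\alpha(n)=0\}$ into Theorem~\ref{thmMS} so that the resulting $X\subseteq Y$ automatically satisfies~(a). You instead take $Y=\ben$, obtain $X$, and then \emph{zero out} $\alpha_0$ on $X$ to define $\alpha$; since this $\alpha$ agrees with $\alpha_0$ off $X$, the conclusion of Theorem~\ref{thmMS} applied with $\beta=\alpha$ gives $\alpha\in P$, and any $\beta$ agreeing with $\alpha$ off $X$ also agrees with $\alpha_0$ off $X$, yielding~(b). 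Your variant is slightly slicker in that it sidesteps the $D_l$ argument entirely; the paper's version has the minor aesthetic advantage that no auxiliary point $\alpha_0$ is needed.
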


\begin{proof} By Lemma \ref{lembairemeas}(1), (2), and (4),
$P$ has the property of Baire in $A$ and is trivially 
nonmeagre.  Let $f:A\to\{0,1\}$ be the characteristic function of $P$. 
Pick $U$ as guaranteed for $f$ by Theorem \ref{thmMS}.

We claim that there is some $\alpha\in U\cap P$ such that
$\{n\in\ben:\alpha(n)=0\}$ is infinite.  To see this,
for each $l\in\ben$, let $D_l=\{\alpha\in A:(\forall n>l)(\alpha(n)>0)\}$.
Then each $D_l$ is closed with empty interior.  Thus
$S=(A\setminus U)\cup\bigcup_{l=1}^\infty D_l$
is meagre.  Pick $\alpha\in P\setminus S$.  Since
$\alpha\notin \bigcup_{l=1}^\infty D_l$, $Y=\{n\in\ben:\alpha(n)=0\}$ is infinite.
Pick $F$ as guaranteed by Theorem \ref{thmMS} for $\alpha$ and
let $G=\emptyset$.  Pick $X\subseteq Y$ such that
$(\forall\beta\in A)\big((\forall n\in\ben\setminus X)
(\beta(n)=\alpha(n))\Rightarrow f(\beta)=f(\alpha)\big)$.
Since $\alpha\in P$ one has $f(\alpha)=1$ so $X$ is as required.\end{proof}

By Lemmas \ref{Ebig} and \ref{lembaire} we have completed the
proof of Theorem \ref{kHgc} for the case that $Z$ is nonmeagre with the
property of Baire.

The proofs of the following two lemmas are based on the proof
of \cite[Theorem 3]{MS}.

\begin{lemma}\label{MSlem} Let $S$ be a closed subset of 
$A$ and let $\epsilon>0$.  Then there exists $n_0\in\ben$
such that for all $n\geq n_0$, 
$\mu\left(\bigcap_{\eta\in Q_{\{n\}}}(S+\eta)\right)>\mu(S)-\epsilon$.
\end{lemma}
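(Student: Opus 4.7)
The plan is to approximate $S$ in measure by a cylinder set depending on only finitely many coordinates, and then exploit the invariance of such a set under translations by elements of $Q_{\{n\}}$ for sufficiently large $n$. I regard $A$ as a compact abelian group under coordinatewise addition modulo $m$, so that $\mu$ is the Haar measure; in particular translation is measure-preserving, and the sets $S+\eta$ are unambiguously defined.

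Given $\epsilon>0$, set $\delta=\epsilon/(m+1)$. For each $N\in\ben$, let $\mathcal{F}_N$ denote the finite Boolean algebra of cylinder sets in $A$ depending only on the first $N$ coordinates. Since $\bigcup_N\mathcal{F}_N$ is a field generating the Borel $\sigma$-algebra of $A$, a standard measure-approximation result (a corollary of Carath\'eodory's extension theorem) yields some $N$ and some $S'\in\mathcal{F}_N$ with $\mu(S\symdif S')<\delta$; in particular $\mu(S')\geq\mu(S)-\delta$. I claim $n_0=N+1$ works.

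Fix $n>N$. Every $\eta\in Q_{\{n\}}$ has support in $\{n\}\subseteq \ben\setminus\{1,2,\ldots,N\}$, and $S'$ does not depend on coordinate $n$, so $S'+\eta=S'$ for each such $\eta$. Hence $\bigcap_{\eta\in Q_{\{n\}}}(S'+\eta)=S'$. Moreover,
$$S'\setminus\bigcap_{\eta\in Q_{\{n\}}}(S+\eta)\;\subseteq\;\bigcup_{\eta\in Q_{\{n\}}}\bigl((S'+\eta)\setminus(S+\eta)\bigr)\,,$$
whose measure is at most $\sum_{\eta\in Q_{\{n\}}}\mu(S'\setminus S)\leq m\,\mu(S\symdif S')<m\delta$ by translation-invariance of $\mu$ together with $|Q_{\{n\}}|=m$. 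Combining these bounds,
$$\textstyle\mu\bigl(\bigcap_{\eta\in Q_{\{n\}}}(S+\eta)\bigr)\;\geq\;\mu(S')-m\delta\;\geq\;\mu(S)-(m+1)\delta\;=\;\mu(S)-\epsilon\,,$$
as required.

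There is no real obstacle here: the argument is essentially a routine exercise in product measure theory, the only substantive input being the cylinder-set approximation. I note in passing that the closedness of $S$ is not really used anywhere in this proof; any $\mu$-measurable $S$ suffices, and closedness presumably enters only in downstream lemmas where topological regularity is invoked alongside the measure-theoretic conclusion of this lemma.
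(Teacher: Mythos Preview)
Your proof is correct and rests on the same core idea as the paper's: approximate $S$ by a cylinder set depending on only finitely many coordinates, then use that such a set is invariant under shifts in $Q_{\{n\}}$ for large $n$. The paper's implementation differs in that it uses the compactness of $S$ to produce an \emph{open} cylinder set $W\supseteq S$ with $\mu(W)<\mu(S)+\epsilon/m$ (by extracting a finite subcover by basic open sets), and then bounds $\mu\bigl(W\setminus(S+\eta)\bigr)$ from above. Your symmetric-difference approximation via Carath\'eodory's theorem is slightly slicker and, as you correctly note, does not actually require $S$ to be closed; any $\mu$-measurable $S$ would do.

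One small presentational point: your final displayed chain is written with $\geq$ throughout, which on its face gives only $\mu\bigl(\bigcap_\eta(S+\eta)\bigr)\geq\mu(S)-\epsilon$, whereas the lemma demands strict inequality. Since $\mu(S\symdif S')<\delta$ is strict, both $\mu(S')>\mu(S)-\delta$ and $\sum_{\eta}\mu\bigl((S'+\eta)\setminus(S+\eta)\bigr)<m\delta$ are in fact strict, and tracing this through yields the required strict conclusion; you should tighten the chain accordingly.
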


\begin{proof} Pick open $W$ such that $S\subseteq W$ and
$\mu(W)<\mu(S)+\frac{\epsilon}{m}$.  Since $S$ is compact,
pick $r\in\ben$ and basic open sets $U_1,U_2,\ldots,U_r$
such that $S\subseteq \bigcup_{t=1}^r U_t\subseteq W$.
We may then presume that $W=\bigcup_{t=1}^r U_t$.  
For $t\in\nhat{r}$ pick finite $F_t\subseteq\ben$ such that
if $\alpha\in U_t$ and $\delta\in A$ such that $\delta(n)=\alpha(n)$
for all $n\in F_t$, then $\delta\in U_t$. Let
$n_0=\max\bigcup_{t=1}^rF_t+1$.

Let $n\geq n_0$. If $\eta\in Q_{\{n\}}$, then $W+\eta=W$.
Now $$\textstyle\mu(S)+\mu(W\setminus S)=\mu(W)<\mu(S)+\frac{\epsilon}{m}$$
so for $\eta\in Q_{\{n\}}$, 
$\mu\big(W\setminus(S+\eta)\big)=\mu\big((W+\eta)\setminus(S+\eta)\big)
=\mu(W\setminus S)<\frac{\epsilon}{m}$.
Therefore, $\mu\left(W\setminus\bigcap_{\eta\in Q_{\{n\}}}(S+\eta)\right)
\leq \sum_{\eta\in Q_{\{n\}}}\mu\big(W\setminus (S+\eta)\big)<\epsilon$
so $\mu\left(\bigcap_{\eta\in Q_{\{n\}}}(S+\eta)\right)>\mu(W)-\epsilon
\geq \mu(S)-\epsilon$.\end{proof}

\begin{lemma}\label{lemTP} Let $P$ be a measurable subset of $A$ such
that $\mu(P)>0$. Then there exist $T\subseteq P$ and an infinite
subset $Y$ of $\ben$ such that $\mu(T)>0$ and
for all $\alpha\in T$ and all $\beta\in A$, if 
$(\forall n\in\ben\setminus Y)\big(\beta(n)=\alpha(n)\big)$,
then $\beta \in P$.
\end{lemma}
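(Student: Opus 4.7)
The plan is to view $A$ as the compact abelian group $(\bez/m\bez)^\ben$ under coordinatewise addition modulo $m$, with $\mu$ as Haar measure; this is the structure implicit in Lemma \ref{MSlem} when it writes $S+\eta$. First I would invoke inner regularity to pass to a closed $S\subseteq P$ with $\mu(S)>0$. Then I would inductively construct integers $n_1<n_2<\cdots$ and a decreasing chain of closed sets $S_0=S\supseteq S_1\supseteq S_2\supseteq\cdots$ by setting $S_j=\bigcap_{\eta\in Q_{\{n_j\}}}(S_{j-1}+\eta)$, choosing $n_j>n_{j-1}$ via Lemma \ref{MSlem} (applied to $S_{j-1}$ with $\epsilon=\mu(S)/2^{j+1}$) so that $\mu(S_j)>\mu(S_{j-1})-\mu(S)/2^{j+1}$. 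Each $S_j$ is closed as a finite intersection of translates of a closed set, and summing the errors yields $\mu(S_j)>\mu(S)/2$ uniformly in $j$; hence the closed set $T:=\bigcap_j S_j$ has $\mu(T)\geq\mu(S)/2>0$, and I would take $Y:=\{n_j:j\geq 1\}$.

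The heart of the argument is a claim I would prove by induction on $j$: for every $\alpha\in S_j$ and every $\eta\in Q_{\{n_1,\ldots,n_j\}}$, one has $\alpha+\eta\in S$. The case $j=0$ is vacuous since $Q_\emptyset=\{0\}$. For the inductive step, decompose $\eta=\eta'+\eta''$ with $\eta'\in Q_{\{n_1,\ldots,n_{j-1}\}}$ and $\eta''\in Q_{\{n_j\}}$; since $Q_{\{n_j\}}$ is a subgroup of $A$ and $\alpha\in S_j\subseteq\bigcap_{\zeta\in Q_{\{n_j\}}}(S_{j-1}+\zeta)$, we get $\alpha+\eta''\in S_{j-1}$, and the inductive hypothesis applied to $\alpha+\eta''\in S_{j-1}$ then gives $(\alpha+\eta'')+\eta'=\alpha+\eta\in S$.

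To conclude, given $\alpha\in T$ and any $\beta\in A$ with $\beta(n)=\alpha(n)$ for all $n\in\ben\setminus Y$, I would set $\eta=\beta-\alpha$, which is supported in $Y$ though possibly at infinitely many points, and approximate it by the truncations $\eta_N$ of $\eta$ to $\{n_1,\ldots,n_N\}$. Since $\alpha\in T\subseteq S_N$ and $\eta_N\in Q_{\{n_1,\ldots,n_N\}}$, the claim gives $\alpha+\eta_N\in S$; as $N\to\infty$, $\eta_N\to\eta$ in the product topology on $A$, so $\alpha+\eta_N\to\beta$, and closedness of $S$ delivers $\beta\in S\subseteq P$. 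The main obstacle is precisely this last step: the iterative construction naturally handles only finitely supported modifications on $Y$, and passing to arbitrary modifications (altering $\alpha$ at infinitely many coordinates at once) requires that every $S_j$ remain closed throughout, so that the pointwise limit $\alpha+\eta_N\to\beta$ is forced to lie in $S$.
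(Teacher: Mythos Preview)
Your proof is correct and follows essentially the same route as the paper's: pass to a closed $S_0\subseteq P$ by inner regularity, iterate Lemma~\ref{MSlem} to build $n_1<n_2<\cdots$ and a nested chain $S_0\supseteq S_1\supseteq\cdots$ with $S_j=\bigcap_{\eta\in Q_{\{n_j\}}}(S_{j-1}+\eta)$, take $T=\bigcap_j S_j$ and $Y=\{n_j\}$, then use the inductive claim and the closedness of $S_0$ to handle infinitely many coordinate changes via a limit. The only cosmetic difference is bookkeeping: the paper keeps a single target $\mu(S_r)>\mu(P)-\epsilon$ by tuning the error passed to Lemma~\ref{MSlem} at each step, whereas you sum a geometric series of errors; both give $\mu(T)>0$.
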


\begin{proof} Let $\epsilon=\frac{\mu(P)}{2}$.  Pick closed
$S_0\subseteq P$ such that $\mu(S_0)>\mu(P)-\epsilon$. By 
Lemma \ref{MSlem} (using $\epsilon'=\mu(S_0)-\mu(P)+\epsilon$)
pick $n_1\in\ben$ such that for all $n\geq n_1$, 
$\mu\left(\bigcap_{\eta\in Q_{\{n\}}}(S_0+\eta)\right)>\mu(P)-\epsilon$.
Let $S_1=\bigcap_{\eta\in Q_{\{n_1\}}}(S_0+\eta)$.  Inductively,
let $r\in\ben$ and assume we have $n_r$ and $S_r$ such that
$\mu(S_r)>\mu(P)-\epsilon$. By Lemma \ref{MSlem}
pick $n_{r+1}>n_r$ such that for all $n\geq n_{r+1}$, 
$\mu\left(\bigcap_{\eta\in Q_{\{n\}}}(S_r+\eta)\right)>\mu(P)-\epsilon$
and let $S_{r+1}=\bigcap_{\eta\in Q_{\{n_{r+1}\}}}(S_r+\eta)$.
Note that $S_{r+1}\subseteq S_r$.

Let $T=\bigcap_{r=0}^\infty S_r$, let $Y=\{n_r:r\in\ben\}$, and
note that $\mu(T)\geq\mu(P)-\epsilon$.  Note that
for any $r\in\ben$, $\eta\in Q_{\{n_r\}}$, and $\alpha\in S_r$, 
$\alpha+\eta\in S_{r-1}$ (since $-\eta\in Q_{\{n_r\}})$.  Therefore
if $\eta\in Q_{\{n_1,n_2,\ldots,n_r\}}$ and $\alpha\in S_r$,
then $\alpha+\eta\in S_0$.

Now let $\alpha\in T$ and $\beta\in A$ and assume that
$(\forall n\in\ben\setminus Y)\big(\beta(n)=\alpha(n)\big)$.
For $r\in\ben$ define $\beta_r\in A$ by, for $n\in\ben$,
$$\beta_r(n)=\cases{\beta(n)&if $n\leq n_r$\cr
\alpha(n)&if $n>n_r$.\cr}$$
Now for each $r$, $\beta_r=\alpha+(\beta_r-\alpha)$, $\alpha\in S_r$,
and $\beta_r-\alpha\in Q_{\{n_1,n_2,\ldots,n_r\}}$, so $\beta_r\in S_0$.
Since $\displaystyle \lim_{r\to\infty}\beta_r=\beta$ and $S_0$ is closed,
$\beta\in S_0\subseteq P$.
\end{proof}

\begin{lemma}\label{lemmeasure} Let $Z$ be a measurable subset
of $(0,1)$  such that $\lambda(Z)>0$ and let 
$P=\psi^{-1}[Z\cap W]$. Then there exist $\alpha\in P$ and an infinite
subset $X$ of $\ben$ such that 
\begin{itemize}
\item[(a)] for all $n\in X$, $\alpha(n)=0$ and
\item[(b)] for all $\beta\in A$, if $(\forall n\in\ben\setminus X)\big(\beta(n)=\alpha(n)\big)$,
then $\beta \in P$.
\end{itemize}
\end{lemma}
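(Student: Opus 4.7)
The plan is to parallel the Baire argument (Lemma \ref{lembaire}), but use Lemma \ref{lemTP} in place of Theorem \ref{thmMS}. First I would note that $P=\psi^{-1}[Z\cap W]$ is measurable in $A$ with positive measure: by Lemma \ref{lembairemeas}(6), $P$ is measurable in $B$ with $\mu(P)=\lambda(Z\cap W)$, and since $C$ is countable, $\lambda(Z\cap W)=\lambda(Z)>0$; then Lemma \ref{lembairemeas}(5) promotes measurability in $B$ to measurability in $A$.

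Now apply Lemma \ref{lemTP} to $P$ to obtain a measurable $T\subseteq P$ with $\mu(T)>0$ and an infinite $Y\subseteq\ben$ such that for every $\alpha\in T$ and every $\beta\in A$, if $\beta(n)=\alpha(n)$ for all $n\in\ben\setminus Y$ then $\beta\in P$. To finish, I need to produce $\alpha\in T$ whose support misses $Y$ infinitely often; then setting $X=\{n\in Y:\alpha(n)=0\}$ gives (a) for free, and (b) follows because $X\subseteq Y$ implies $\ben\setminus Y\subseteq\ben\setminus X$, so any $\beta$ agreeing with $\alpha$ off $X$ also agrees off $Y$.

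Finding such an $\alpha$ is a routine measure-theoretic observation: the set
$$\textstyle E=\{\alpha\in A:\{n\in Y:\alpha(n)=0\}\text{ is finite}\}
=\bigcup_{N\in\ben}\bigcap_{n\in Y,\,n>N}\{\alpha:\alpha(n)\neq 0\}$$
has measure zero, because under the product measure $\mu$ the coordinates are independent and each factor $\{\alpha:\alpha(n)\neq 0\}$ has measure $(m-1)/m<1$, so each of the intersections in the union has measure $\prod_{n\in Y,\,n>N}\frac{m-1}{m}=0$ (as $Y$ is infinite). Since $\mu(T)>0=\mu(E)$, we may pick $\alpha\in T\setminus E$, and then $X=\{n\in Y:\alpha(n)=0\}$ is infinite, as required.

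The only real content of the lemma is packaged inside Lemma \ref{lemTP}, which is the measure-theoretic analogue of Moran--Strauss; here all that remains is the bookkeeping of passing from $W$ to $A$ via $\psi$ and the zero-one-style observation that a random $\alpha$ almost surely vanishes on an infinite subset of $Y$. I do not anticipate any substantive obstacle beyond verifying that the passage from $Y$ to its subset $X$ does not break the stability condition, which is immediate from the direction of the set inclusion $\ben\setminus Y\subseteq\ben\setminus X$.
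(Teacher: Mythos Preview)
Your proof is correct and follows essentially the same route as the paper's: apply Lemma~\ref{lemTP} to $P$ to obtain $T$ and $Y$, show that the set of $\alpha\in A$ with only finitely many zeros along $Y$ has $\mu$-measure zero, pick $\alpha\in T$ outside that null set, and take $X=\{n\in Y:\alpha(n)=0\}$. The only cosmetic difference is that the paper also invokes Lemma~\ref{lembairemeas}(3) before (6) (to record that $Z\cap W$ is measurable in $W$), which you use implicitly.
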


\begin{proof}  By Lemma \ref{lembairemeas}(3), (5), and (6)
and the fact that $\lambda(C)=\mu(A\setminus B)=0$, we have
that $P$ is measurable and $\mu(P)>0$.
Pick $T\subseteq P$ and an infinite subset $Y$ of $\ben$ as
guaranteed for $P$ by Lemma \ref{lemTP}.

For each $l\in\ben$, let $D_l=\{\alpha\in A:(\forall n\in Y)(n>l\Rightarrow\alpha(n)>0)\}$.
We claim that $\mu(D_l)=0$.  To see this, enumerate $\{n\in Y:n>l\}$ as
$\langle n_t\rangle_{t=1}^\infty$.  Given $r\in\ben$, let
$H_r=\{\delta\in A:(\forall t\in\nhat{r})(\delta(n_t)>0)\}$.
Then $D_l\subseteq H_r$ and $\mu(H_r)=\left(\frac{m-1}{m}\right)^r$.

Since $(A\setminus B)$ is countable, $\mu(A\setminus B)=0$.
Pick $\alpha\in T\setminus\big((A\setminus B)\cup\bigcup_{l=1}^\infty D_l\big)$.
Let $X=\{n\in Y:\alpha(n)=0\}$. Since $\alpha\in T$ and $X\subseteq Y$, we have that
for all $\beta\in A$, if 
$(\forall n\in\ben\setminus X)\big(\beta(n)=\alpha(n)\big)$,
then $\beta \in P$.
\end{proof}

By Lemmas \ref{Ebig} and \ref{lemmeasure} we have completed the
proof of Theorem \ref{kHgc} for the case that $Z$ is measurable and
$\lambda(Z)>0$ and so Theorem \ref{kHgc} has been established.
Corollary \ref{corkHgc} follows immediately.

\bibliographystyle{plain}

\end{document}